\newcommand{\ie}{\text{i.e.,}~}
\newcommand{\lh}{\text{lh}}
\newcommand{\om}{\ensuremath{\omega}}
\newcommand{\set}[2]{\ensuremath{\{#1 \hspace{0.3mm} \mid \hspace{0.3mm} #2\}}}
\newcommand{\ca}[1]{\ensuremath{\mathcal{#1}}}
\newcommand{\iin}{\ensuremath{i \in \omega}}
\newcommand{\R}{\ensuremath{\mathbb R}}
\newcommand\tboldsymbol[1]{%
\protect\raisebox{0pt}[0pt][0pt]{%
$\underset{\widetilde{}}{\boldsymbol{#1}}$}\mbox{\hskip 1pt}}
\newcommand{\n}{\ensuremath{n \in \omega}}
\newcommand{\ep}{\ensuremath{\varepsilon}}
\newcommand{\rfn}[1]{\ensuremath{\{#1\}}}
\newcommand{\bolds}{\ensuremath{\tboldsymbol{\Sigma}}}
\newcommand{\boldp}{\ensuremath{\tboldsymbol{\Pi}}}
\newcommand{\boldd}{\ensuremath{\tboldsymbol{\Delta}}}
\newcommand{\cn}[2]{\ensuremath{#1 \ast #2}}
\newcommand{\scode}{\mathrm{G}}
\newcommand{\fcode}{\mathrm{F}}
\newcommand{\bcode}{\mathrm{B}}
\newcommand{\ocode}{\mathrm{U}}
\newcommand{\bcodefam}{\mathrm{BC}}
\newcommand{\pointcl}{\Gamma}
\newcommand{\gcode}{\ensuremath{{\rm C}_{\pointcl}}}
\newcommand{\baire}{\ensuremath{{\mathcal{N}}}}
\newcommand{\cfo}{{\rm apc}}
\newcommand{\cind}{{\rm apb}}
\newcommand{\sq}[2]{[#1]_{#2}}
\newcommand{\bdbc}{{\rm rbc}}
\newtheorem{theorem}{Theorem}
\newtheorem{lemma}[theorem]{Lemma}
\newtheorem{definition}[theorem]{Definition}
\newtheorem{proposition}[theorem]{Proposition}
\newtheorem{question}[theorem]{Question}
\newtheorem{remark}[theorem]{Remark}
\begin{document}

\title{Uniformity results on the Baire property}

\author[V. Gregoriades]{Vassilios Gregoriades}
\address{Vassilios Gregoriades\\
Via Carlo Alberto, 10\\ 
10123 Turin, Italy}

\email{vassilios.gregoriades@unito.it}

\thanks{Special thanks are owed to {\sc Arno Pauly} for helpful discussions. This article was initiated while the author was a Scientific Associate at TU Darmstadt, Germany. The author is currently a Fellow of the \textit{Programme 2020 researchers : Train to Move}\includegraphics[scale=0.01]{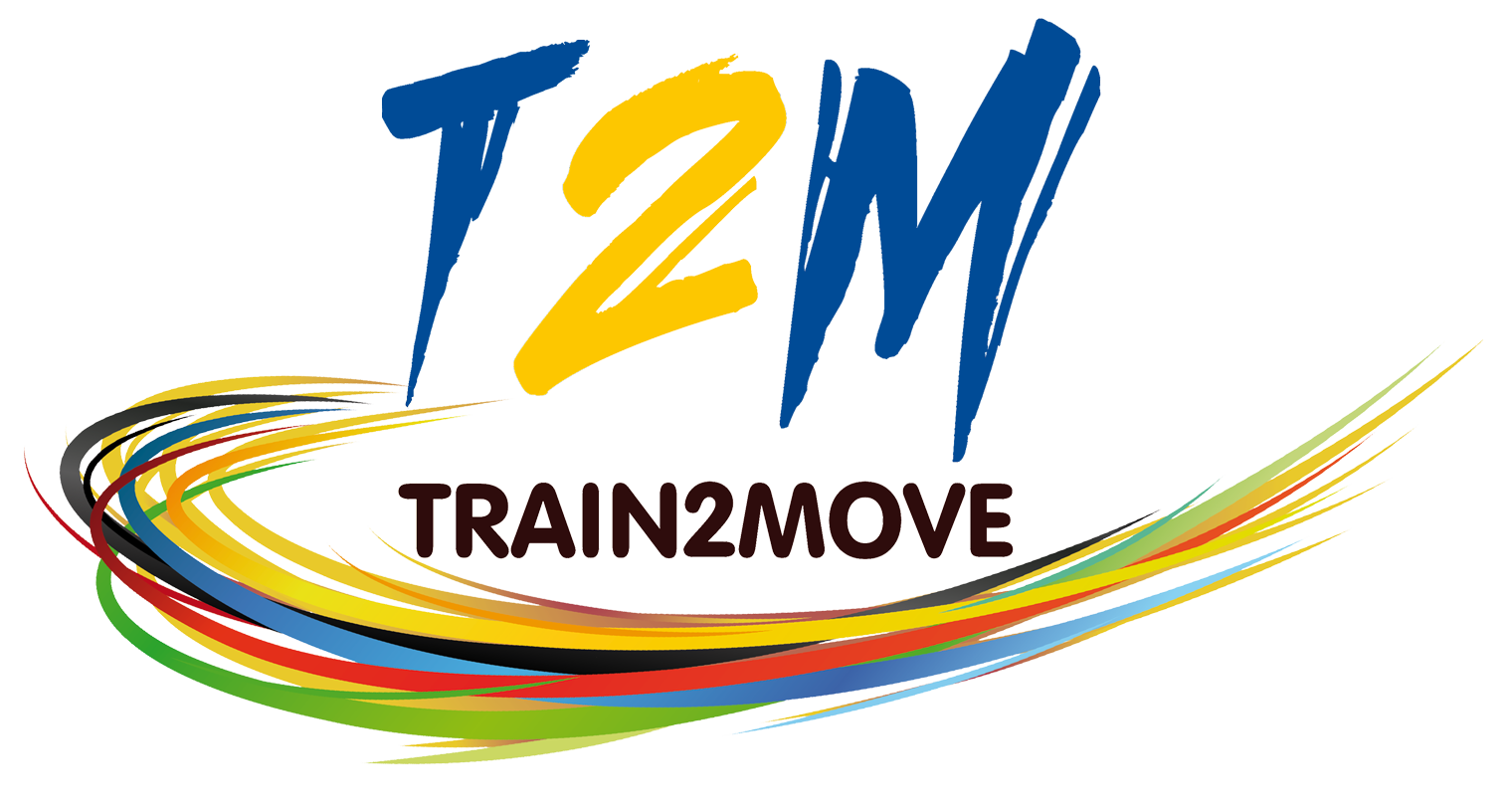} 
at the Mathematics Department ``Guiseppe Peano" of the University of Turin, Italy.}

\subjclass[2010]{03E15, 54H05}

\date{\today}

\begin{abstract}
We are  concerned with the problem of witnessing the Baire property of the Borel and the projective sets (assuming determinacy) through a sufficiently definable function in the codes. We prove that in the case of projective sets it is possible to satisfy this for almost all codes using a continuous function. We also show that it is impossible to improve this to all codes even if more complex functions in the codes are allowed. We also study the intermediate steps of the Borel hierarchy, and we give an estimation for the complexity of such functions in the codes, which verify the Baire property for actually all codes.
\end{abstract}

\keywords{Baire property uniformly, analytic sets, projective sets}

\maketitle

\section{Introduction}
We begin with some comments on notation. By $\om$ we mean the first infinite ordinal and by $\om_1$ the first uncountable one. 

We will regularly write $P(x)$ instead of $x \in P$. Our sets will usually be subsets of finite products of spaces. Given $P \subseteq X \times Y$ and $x \in X$ we denote by $P_x$ the \emph{$x$-section} $\set{y \in Y}{P(x,y)}$ of $P$. Many of our results involve functions on the index of a section, \ie we will need to consider sets of the form $P_{u(z)}$ for some function $u$. For reasons of exposition we will denote the section $P_x$ also as $P(x)$. Since the arguments of $P$ will be clear from the context, there will be no danger of confusion by this abuse of notation. 

We denote the set of all finite sequences of naturals (including the empty one) by $\om^{< \om}$. We view the latter set with the discrete topology. A typical element of $\om^{< \om}$ will be denoted by $(u_0,\dots,u_{n-1})$, where $n \in \om$. The case $n = 0$ refers to the empty sequence, which we denote by $\emptyset$. The \emph{length of $u$}, denoted by $\lh(u),$ is the preceding unique $n$.

If $u = (u_0,\dots,u_{n-1})$ and $m < n$ by $u \upharpoonright m$ we mean the \emph{restriction} $(u_0,\dots, u_{m-1})$ of $u$ to $m$.

We fix the function $\langle \cdot \rangle: \om^{< \om} \to \om:$
\[
\langle u_0,\dots, u_{n-1} \rangle = p_0^{u_0+1} \cdot p_1^{u_1+1} \cdot \dots \cdot p_{n-1}^{u_{n-1}+1},
\]
where $n \geq 1$ and $\set{p_i}{i \in \om}$ is the increasing enumeration of all prime numbers. We also set $\langle \emptyset \rangle = 1$. 

Clearly the function $\langle \cdot \rangle$ is injective. Given $s = \langle u_0,\dots,u_{n-1} \rangle$ and $i < n$ we put $(s)_i : = u_i$. If $i \geq n$ or $s$ is not in the image of $\langle \cdot \rangle$ we set $(s)_i = 0$. 
 
We fix once and for all the following enumeration $(q_k)_{k \in \om}$ of the set of all non-negative rational numbers,
\[
q_k = \dfrac{(k)_0}{(k)_1+1}\;.
\]

A \emph{Polish space} is a separable topological space \ca{X}, which is metrizable by a complete metric. For every Polish space \ca{X} we choose a compatible metric $d_{\ca{X}}$ and a dense sequence $(r^{\ca{X}}_i)_{\iin}$. We put
\[
N(\ca{X},s) = \textrm{the $d_{\ca{X}}$-ball with center $r^{\ca{X}}_{(s)_0}$ and radius $q_{(s)_1}$}.
\]
(By zero radius we mean the empty set.)
Therefore the family
\[
\set{N(\ca{X},s)}{s \in \om}
\]
is a basis for the topology of \ca{X}.

Examples of Polish spaces include the \emph{reals} $\R$, the \emph{open unit interval} $(0,1)$ and the \emph{Baire space} $\baire : = \om^\om$ (the set of all infinite sequences of naturals with the product topology). The members of the Baire space are denoted by lowercase Greek letters $\alpha, \beta, \gamma, \dots$.
 
Given $\alpha \in \baire$ we put $\alpha^\ast = (\alpha(1),\alpha(2),\dots)$, \ie 
\[
\alpha^\ast = (i \mapsto \alpha(i+1)).
\]

In the case of the Baire space we make a specific choice for the metric $d_{\baire}$ and the dense sequence $(r^{\baire}_s)_{s \in \om}$, namely
\[
d_\baire(\alpha,\beta) =
\begin{cases}
(n+1)^{-1}, & \ \text{if $\alpha \neq \beta$ and $n$ is the least natural $k$ such that}\\
& \  \alpha(k) \neq \beta(k),\\
0, & \ \text{if $\alpha = \beta$};
\end{cases}
\]
and
\[
r^\baire_s =
\begin{cases}
(u_0,\dots,u_{n-1},0,0,0,\dots), & \ \text{if} \ s = \langle u_0,\dots,u_{n-1}\rangle\\
                                                   & \  \text{for some} \ u = (u_0,\dots,u_{n-1}) \in \om^{<\om},\\
(0,0,0,\dots), & \ \text{else}.
\end{cases}
\]
In other words $r^\baire_s(i) = (s)_i$ for all $s,i$.

With this particular choice of the metric and the dense sequence in the Baire space every set of the form
\[
V_u : = \set{\alpha \in \baire}{(\forall i < \lh(u))[\alpha(i) = u(i)]}, \quad u \in \om^{< \om},
\]
has the form $N(\baire,s_u)$ for some $s_u \in \om$. More specifically
\begin{align*}
V_u 
=& \ \set{\alpha \in \baire}{d_\baire(\alpha,r^{\baire}_{\langle u_0,\dots,u_{\lh(u)-1}\rangle}) < (\lh(u)+1)^{-1}}\\ 
=& \ N(\baire, \langle \langle u_0,\dots,u_{\lh(u)-1}\rangle, \langle 1,\lh(u) \rangle \rangle).
\end{align*}
Thus for the recursive function $\hat{s}: \om^{< \om} \to \om:$
\[
\hat{s}(u) = \langle \langle u_0,\dots,u_{\lh(u)-1}\rangle, \langle 1,\lh(u) \rangle \rangle,
\]
we have $V_u = N(\baire,\hat{s}(u))$ for all $u$.

Conversely for every $s \in \om$ with $N(\baire,s) \neq \emptyset$ there is some $u \in \om^{<\om}$ such that $N(\baire,s) = V_u = N(\baire,\hat{s}(u))$.

To see this we notice that $N(\baire,s) \neq \emptyset$ exactly when $q_{(s)_1} \neq 0$, \ie  $((s)_1)_0 \neq 0$. In the latter case let $n_s$ be the least natural $n$ such that $\dfrac{1}{n+1} < \dfrac{((s)_1)_0}{((s)_1)_1+1}\;$. Then $N(\baire,s)$ consists exactly of all $\alpha \in \baire$ which agree with $r^\baire_{(s)_0}$ on the first $n_s-1$ coordinates, \ie
\[
N(\baire,s) = \set{\alpha}{(\forall i < n_s)[\alpha(i) = ((s)_0)_i]}.
\]
(If $n_s = 0$ then $N(\baire,s) = \baire$.) 

Hence $N(\baire,s) = V_u$, where $u = (((s)_0)_0,\dots,((s)_0)_{n_s-1})$. We also see  that one can in fact choose $u$ as a recursive function on $s$, \ie the recursive  function $\check{u}: \om \to \om^{<\om}$:
\[
\check{u}(s)=
\begin{cases}
((s)_0)_0,\dots,((s)_0)_{n_s-1}), & \ \text{if $((s)_1)_0 \neq 0$ and $n_s$ is the least natural $n$}\\
& \  \text{such that $\dfrac{1}{n+1} < \dfrac{((s)_1)_0}{((s)_1)_1+1}\;$};\\
0, & \ \text{else};
\end{cases}
\]
satisfies $N(\baire,s) = V_{\check{u}(s)}$ for all $s$ for which $N(\baire,s) \neq \emptyset$.

\subsubsection*{Measurability of functions} Suppose that $\pointcl$ is a class, also called \emph{pointclass}, of sets in Polish spaces. The family of all subsets of a Polish space \ca{X} which belong to the pointclass $\pointcl$ is denoted by $\pointcl \upharpoonright \ca{X}$.

A function $f: \ca{X} \to \ca{Y}$ between Polish spaces is \emph{$\pointcl$-measurable} if for all open $V \subseteq \ca{Y}$ the prei-image $f^{-1}[V]$ is in $\pointcl \upharpoonright \ca{X}$.  We will need to consider the measurability of functions, which are defined on arbitrary subsets of Polish spaces. If $\ca{X}$ and $\ca{Y}$ are Polish spaces and $A$ is a non-empty subset of \ca{X} we say that a function $f: A \to \ca{Y}$ is \emph{$\pointcl$-measurable} if for all open $V \subseteq \ca{Y}$ there is some $W \in \pointcl \upharpoonright \ca{X}$ such that $f^{-1}[V] = W \cap A$.
 
\subsubsection*{Universal systems} Given a Polish space \ca{X} and a set $G \subseteq \baire \times \ca{X}$ we say that $G$ \emph{parametrizes $\pointcl \upharpoonright \ca{X}$} if for all $P \subseteq \ca{X}$ we have that $P$ is in $\pointcl$ exactly when for some $\alpha \in \baire$ it holds $P = G_\alpha$. The set $G$ is \emph{universal for $\pointcl \upharpoonright \ca{X}$} if $G \in \pointcl$ and parametrizes $\pointcl \upharpoonright \ca{X}$.

A class $(G^\ca{X}_{\pointcl})_{\ca{X}\text{: Polish}}\;$, is a parametrization (universal) system for $\pointcl$ if for each $\ca{X}$ the set $G^\ca{X}_{\pointcl}$ parametrizes (resp. is universal for) $\pointcl\upharpoonright \ca{X}$. 

In the sequel we employ the usual pointclasses $\bolds^i_n$, $\boldp^i_n$, $\boldd^i_n$, $i=0,1, \n$, from descriptive set theory. E.g., $\bolds^0_1$ consists of all open sets, $\boldp^0_1$ of all closed sets, $\bolds^0_2$ of all $F_\sigma$ sets and so on. The members of $\bolds^1_1$ are the \emph{analytic} sets. A set is \emph{co-analytic} if its complement is analytic.

The pointclasses $\bolds^i_n$, and  consequently $\boldp^i_n$, $i=0,1, \n$, admit a universal system in a natural way:

For every Polish space \ca{X} we define the set $\ocode^\ca{X}, \fcode^\ca{X} \subseteq \baire \times \ca{X}$ by
\begin{align*}
\ocode^\ca{X}(\alpha,x) \iff& (\exists n)[x \in N(\ca{X},\alpha(n))]\\
\fcode^\ca{X}(\alpha,x) \iff& \neg \ocode^\ca{X}(\alpha,x).
\end{align*}
It is clear that the sets $\ocode^\ca{X}, \fcode^\ca{X}$ are universal for $\tboldsymbol{\Sigma}^0_1 \upharpoonright \ca{X}$ and $\tboldsymbol{\Pi}^0_1 \upharpoonright \ca{X}$ respectively. This construction proceeds of course to all levels of the Borel hierarchy, but for the moment we will skip them and deal with the projective sets. By recursion we define $\scode^\ca{X}_n \subseteq \baire \times \ca{X}$ as follows
\begin{align*}
\scode^\ca{X}_1(\alpha,x) \iff& (\exists \gamma) \fcode^{\ca{X} \times \baire}(\alpha,x,\gamma)\\
\scode^\ca{X}_{n+1}(\alpha,x) \iff& (\exists \gamma) \neg \scode^{\ca{X} \times \baire}_n(\alpha,x,\gamma).
\end{align*}
An easy inductive argument shows that $\scode_n^\ca{X}$ is universal for $\bolds^1_n \upharpoonright \ca{X}$ for all $n \geq 1$. By saying that $\alpha$ is for instance an \emph{open code} for $P\subseteq \ca{X}$ we mean that $P$ is the $\alpha$-section of the set $\ocode^\ca{X}$. We fix the preceding universal sets throughout the rest of this article.

One key aspect of our chosen universal system systems is that they are \emph{good}, \ie for every space \ca{X} of the form $\om^n \times \baire^m$, where $n,m \geq 0$, and every Polish space $\ca{Y}$ there is a continuous function $S: \baire \times \ca{X} \to \baire$ such that for all $\ep$, $\alpha$, $x$ it holds
\[
\gcode^{\baire \times \ca{X}}(\ep,\alpha,x) \iff \gcode^{\ca{X}}(S(\ep,\alpha),x),
\]
where $\pointcl$ is the pointclass under discussion and $(\gcode^\ca{X})_{\ca{X}}$ is the corresponding universal system. This is verified by straight-forward computations, which we omit.

\subsubsection*{Effective notions}

We will assume some familiarity with the topic of effective descriptive theory. The usual reference to the latter is \cite{yiannis_dst}. Let us recall some fundamental notions. A sequence $(x_n)_{\n}$ in the complete separable metric space $(\ca{X},d_{\ca{X}})$ is a \emph{recursive presentation} of $(\ca{X},d_{\ca{X}})$ if it forms a dense subset of $(\ca{X},d_{\ca{X}})$ and the relations $P, Q \subseteq \om^3$, defined by
\begin{align*}
P(i,j,k) \iff& d(x_i,x_j) < q_k\\
Q(i,j,k) \iff& d(x_i,x_j) \leq q_k
\end{align*}
are recursive. The metric space $(\ca{X},d_{\ca{X}})$ is \emph{recursively presented} if it admits a recursive presentation. We may assume without loss of generality that the sequence $(r^{\ca{X}}_i)_{\iin}$ that we fixed above, is the recursive presentation of $(\ca{X},d_{\ca{X}})$, when the latter is recursively presented. In fact the sequence $(r^\baire_s)_{s \in \om}$ is a recursive presentation of  $(\baire, d_{\baire})$. Other examples of recursively presented metric spaces are the reals and $\om$ with the usual metric, and $\om^{< \om}$ with the discrete metric.

The class of \emph{effectively open} or else \emph{semirecursive} sets, denoted by $\Sigma^0_1$ is the class of all sets of the form $\ocode^\ca{X}_\alpha$ for some recursive $\alpha: \om \to \om$, where $(\ca{X},d_{\ca{X}})$ is recursively presented. By replacing the term ``recursive" with ``$\ep$-recursive" one defines the class $\Sigma^0_1(\ep)$. For each of the classical pointclasses $\bolds^i_n$, $\boldp^i_n$ $\boldd^i_n$ there are the respective effective pointclasses $\Sigma^i_n(\ep)$, $\Pi^i_n(\ep)$, $\Delta^i_n(\ep)$. As it is well-known the $\Sigma^i_n(\ep)$ subsets of recursively presented metric spaces are exactly the $\ep$-recursive sections of the sets $\scode^\ca{X}_n$, and similarly for the other pointclasses.

A function $f: \ca{X} \to \ca{Y}$ between recursively presented metric spaces is $\Gamma$-recursive (where $\Gamma$ is one of the preceding effective pointclasses) if the relation $R^f \subseteq \ca{X} \times \om$ defined by
\[
R^f(x,s) \iff f(x) \in N(\ca{Y},s)
\]
is in $\Gamma$. We say ``recursive" instead of ``$\Sigma^0_1$- recursive. The notion of $\Gamma$-recursiveness can be considered as the effective analogue of $\tboldsymbol{\Gamma}$-measurability, e.g., a recursive function is the effective analogue of continuity.

\subsection*{Borel codes}

The Borel classes $\bolds^0_n$, $n \geq 1$ are naturally extended to the classes $\bolds^0_\xi$ of transfinite order, where $1 \leq \xi < \om_1$. The latter classes admit universal sets but here we will use a different type of encoding introduced by Louveau-Moschovakis \cite{louveau_a_separation_theorem_for_sigma_sets} and \cite{yiannis_dst}.

A partial function $f: X \rightharpoonup Y$ from $X$ to $Y$ is a $Y$-valued function $f$, which is defined on a (perhaps empty) subset of $X$. We write $f(x) \downarrow$ to denote that $x$ is in the domain of $f$.

In the sequel we denote by $\rfn{\alpha}^{\om, \baire}$ or simpler by $\rfn{\alpha}$ the largest partial function $f: \om \rightharpoonup \baire$, which is computed on its domain by $\ocode^{\om \times \om}_\alpha$, \ie $\rfn{\alpha}^{\om, \baire} \downarrow$ exactly when there is a unique $\beta \in \baire$ such that for all $s \in \om$ we have that
\[
\beta \in N(\baire,s) \iff \ocode^{\om \times \om}(\alpha,n,s).
\]
In this case we let $\rfn{\alpha}^{\om, \baire}$ be the unique $\beta$ as above, so that when $\rfn{\alpha}(n) \downarrow$ we have
\begin{align}\label{equation compute partial function}
\rfn{\alpha}(n) \in N(\baire,s) \iff \ocode^{\om \times \om}(\alpha,n,s)
\end{align}
for all $s \in \om$.

We define by recursion the families $\bcodefam_\xi \subseteq \baire$, $\xi < \om_1$ as follows
\begin{align*}
\alpha \in \bcodefam_0 \iff& \alpha(0) = 0,\\
\alpha \in \bcodefam_\xi \iff& \alpha(0)=1 \ \& \ (\forall n)(\exists \zeta < \xi)[\rfn{\alpha^\ast}(n) \downarrow \ \& \ \rfn{\alpha^\ast}(n) \in \bcodefam_\zeta].
\end{align*}
The set of \emph{Borel codes} is 
\[
\bcodefam = \cup_{\xi < \om_1} \bcodefam_\xi.
\]
For $\alpha \in \bcodefam$ we put
\[
|\alpha| = \textrm{the least $\xi$ such that $\alpha \in \bcodefam_\xi$}.
\]
Given a Polish space $\ca{X}$ we define $\pi^{\ca{X}}_\xi : \bcodefam_\xi \to \bolds^0_\xi \upharpoonright \ca{X}$ by recursion on $\xi < \om_1$,
\begin{align*}
\pi^\ca{X}_0(\alpha) =& \ \ca{X} \setminus N(\ca{X},\alpha(1))\\
\pi^\ca{X}_\xi(\alpha) =& \ \cup_{n} \ca{X} \setminus \pi^\ca{X}_{|\rfn{\alpha^\ast}(n)|}(\rfn{\alpha^\ast}(n)),
\end{align*}
whereas by $\bolds^0_0 \upharpoonright \ca{X}$ we mean the family $\set{\ca{X} \setminus N(\ca{X},s)}{s \in \om}$. (It is clear that the members of $\bolds^0_1$ are the countable unions of the complements of  $\bolds^0_0$ sets.)

One can verify that the preceding functions $\pi^\ca{X}_\xi$ are surjective, and so every set in $\bolds^0_\xi \upharpoonright \ca{X}$ has a code through $\pi^\ca{X}_\xi$. It is also not hard to see that $\bcodefam_\zeta \subseteq \bcodefam_\xi$ and $\pi^\ca{X}_\xi \upharpoonright \bcodefam_\zeta = \pi^\ca{X}_\zeta$ for all $1 \leq \zeta \leq \xi$.

It follows that the function
\begin{align*}
\pi^\ca{X}: \bcodefam \to {\rm Borel}(\ca{X}): \pi^\ca{X}(\alpha) =& \ \pi^\ca{X}_{|\alpha|}(\alpha)\\
                                                                                 =& \ \pi^\ca{X}_{\xi}(\alpha) \quad (\textrm{when} \ \xi > |\alpha| \geq 1),
\end{align*}
gives a parametrization of all Borel subsets of \ca{X}.

Finally we consider the sets $\bcode^\ca{X}_\xi \subseteq \baire \times \ca{X}$,
\[
\bcode^\ca{X}_\xi(\alpha,x) \iff \alpha \in \bcodefam_\xi \ \& \ x \in \pi^\ca{X}(\alpha).
\]
From the preceding comments it is clear that $\bcode^\ca{X}_\xi$ parametrizes $\bolds^0_\xi \upharpoonright \ca{X}$.

\begin{remark}\normalfont
\label{remark equivalent coding of Borel sets}
One can encode the Borel sets in the same manner as above by replacing $\rfn{\alpha}(n) \in \baire$ with $(\alpha)_n \in \baire$, where
\[
(\alpha)_n(t) = \alpha(\langle n,t \rangle) \quad t,n \in \om.
\]
This encodes the Borel sets in a slightly different way, e.g., the corresponding set of Borel codes is not the same as the $\bcodefam$ from above, although it has similar properties. Moreover these two ways of encoding the Borel sets are equivalent, in the sense that we can pass from a code with respect to one way to a code with respect to the other way using continuous (in fact recursive) functions, see \cite[7B.8]{yiannis_dst}.

While $(\alpha)_n$ is easier than $\rfn{\alpha}(n)$ to understand, there are certain technical advantages using the latter, for example when applying the Kleene Recursion Theorem. In our proofs though, it is not necessary to use the $\rfn{\alpha}(n)$-coding, so the reader may very well interpret $\rfn{\alpha}(n)$ as $(\alpha)_n$ in all of our subsequent arguments (with the exception of Remark \ref{remark the set Def is pi02} which refers specifically to the functions $\rfn{\alpha}$). 

Even though the advantages of the $\rfn{\alpha}(n)$-coding do not manifest in this article, we nevertheless opt for using it, as we think that this way the proofs provide more information to the research community.
\end{remark}

\section{The Results}

Recall that  a subset $P$ of a Polish space $\ca{X}$ has the \emph{Baire property} if there exists an open set $U \subseteq \ca{X}$ such that the symmetric difference $P \triangle U : = (P \setminus U) \cup (U \setminus P)$ is meager.

A fundamental property of the analytic sets is that they have the Baire property (Lusin-Sierpinski \cite{lusin_sierpinski_sur_un_enseble_non_measurable_B}). Moreover under the axiom of \emph{determinacy} of $\pointcl$-games on the naturals, every subset of a Polish space, which is in $\pointcl$ has the Baire property (Banach-Mazur, Oxtoby, see \cite[8.35]{kechris_classical_dst} and also \cite[6A.16]{yiannis_dst}).

The question of witnessing a given property in a uniform continuous way is prominent in descriptive set theory and related areas. For example this uniform approach has applications in the decomposability of Borel-measurable functions and a still open conjecture on the extension of the Jayne-Rogers Theorem, see \cite{kihara_decomposing_Borel_functions_using_the_Shore_Slaman_join_theorem,gregoriades_kihara_ng_Turing_degrees_in_Polish_spaces_and_decomposability_of_Borel_functions}. Another application is the Suslin-Kleene Theorem, which extends the fundamental fact of recursion theory that  ${\sl HYP} = \Delta^1_1$.

In this note we focus on the question of witnessing the Baire property in a continuous way in the codes, \ie given a pointclass $\pointcl$, which admits a universal system $(\gcode^{\ca{X}})_{\ca{X}}$ and for which every set $P \in \pointcl \upharpoonright \ca{X}$ (with \ca{X} being Polish) has the Baire property, does there exist a sufficiently definable function $u: \baire \to \baire$ such that the symmetric difference
\[
\gcode^{\ca{X}}(\alpha) \triangle \ocode^{\ca{X}}(u(\alpha))
\]
is a meager set?

\subsection*{Almost uniform transition} Classes of sets, which have sufficiently good structure and have the Baire property, answer the preceding question almost affirmatively. 

\begin{lemma}
\label{lemma gamma has the Baire property}
Assume that $\pointcl$ admits a universal system $(G^ \ca{Y}_{\pointcl})_{\ca{Y}}$ and that every subset of a Polish space which is in $\pointcl$ has the Baire property. Then for every Polish space \ca{X} there exists a continuous function $u^\ca{X}_{\pointcl}: \baire \to \baire$
such that for almost all $\alpha \in \baire$ the set 
\[
G^\ca{X}_{\pointcl}(\alpha) \triangle \ocode^\ca{X}(u^\ca{X}_{\pointcl}(\alpha))
\]
is meager.
\end{lemma}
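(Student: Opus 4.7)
The strategy is to use that $G := G^\ca{X}_\pointcl$ itself has the Baire property, apply Kuratowski--Ulam to transfer this to sections, and then build the map $u^\ca{X}_\pointcl$ by hand from a rectangular decomposition of an approximating open set.

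Since $(G^\ca{Y}_\pointcl)_\ca{Y}$ is a universal system, $G \subseteq \baire \times \ca{X}$ belongs to $\pointcl$ and, by hypothesis, has the Baire property. I would pick an open $V \subseteq \baire \times \ca{X}$ with $G \triangle V$ meager. Then $G \triangle V$ still has the Baire property (symmetric difference of a BP set and an open set) and is meager, so Kuratowski--Ulam gives that $M := \set{\alpha}{(G \triangle V)_\alpha \text{ is non-meager in } \ca{X}}$ is meager in $\baire$; equivalently, $G_\alpha \triangle V_\alpha$ is meager for every $\alpha$ in a comeager set.

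The heart of the proof is constructing a continuous $u^\ca{X}_\pointcl : \baire \to \baire$ with $\ocode^\ca{X}(u^\ca{X}_\pointcl(\alpha)) = V_\alpha$ for every $\alpha$. Since open sets in a product are countable unions of basic rectangles whose first factor can be taken clopen in $\baire$, I would write
\[
V = \bigcup_{n \in \om} V_{w_n} \times N(\ca{X}, b_n), \qquad w_n \in \om^{<\om},\ b_n \in \om,
\]
where $V_w = \set{\alpha \in \baire}{(\forall i < \lh(w))[\alpha(i) = w(i)]}$, and then set
\[
u^\ca{X}_\pointcl(\alpha)(n) =
\begin{cases}
b_n, & \text{if } \alpha \in V_{w_n}, \\
e_0, & \text{otherwise},
\end{cases}
\]
where $e_0$ is a fixed code with $N(\ca{X}, e_0) = \emptyset$. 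The condition ``$\alpha \in V_{w_n}$'' depends only on $\alpha \upharpoonright \lh(w_n)$, so each coordinate of $u^\ca{X}_\pointcl(\alpha)$ is locally constant in $\alpha$, making $u^\ca{X}_\pointcl$ continuous. By construction $\ocode^\ca{X}(u^\ca{X}_\pointcl(\alpha)) = \bigcup\set{N(\ca{X}, b_n)}{\alpha \in V_{w_n}} = V_\alpha$, so combining with the previous paragraph yields the lemma.

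The delicate point is the continuity of $u^\ca{X}_\pointcl$. It hinges on the clopen basis of $\baire$, which makes the finite-lookahead test ``does $\alpha$ extend $w_n$?'' a decidable operation that can be inserted into the definition of $u^\ca{X}_\pointcl$ coordinate by coordinate; without this, one could not turn a generic rectangular decomposition of $V$ into a continuously varying open code. Once this mechanism is in place, the Baire property hypothesis is invoked exactly once---at the universal set---and the rest is formal manipulation.
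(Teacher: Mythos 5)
Your proof is correct and follows essentially the same route as the paper: both use the Baire property of the universal set $G^\ca{X}_{\pointcl}$, pass to sections via Kuratowski--Ulam, and then produce a continuous map $\alpha \mapsto$ (open code for $V_\alpha$). The only difference is that the paper obtains this last map abstractly from the goodness of the universal system $(\ocode^{\ca{Y}})_{\ca{Y}}$ (via the substitution function $S$ and a code $\ep$ for $V$), whereas you construct it explicitly from a rectangular decomposition of $V$ with clopen first factors --- which is precisely the computation the paper omits when asserting goodness.
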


\begin{proof}Since $G^\ca{X}_{\pointcl}$ belongs in $\pointcl$ it has in particular the Baire property. Hence there is some open set $V \subseteq \baire \times \ca{X}$ such that the symmetric difference $H: = G^\ca{X}_{\pointcl} \triangle V$ is meager. Since $H$ is meager it follows from the Kuratowski-Ulam Theorem that for almost all $\alpha \in \baire$ the section $H(\alpha)$ is meager.

Since $V$ is open there is some $\ep \in \baire$ such that $V = \ocode^{\baire \times \ca{X}}(\ep)$. We thus have
\[
V(\alpha,x) \iff \ocode^{\baire \times \ca{X}}(\ep,\alpha,x) \iff \ocode^\ca{X}(S(\ep,\alpha),x)
\]
for some suitable continuous function $S: \baire \times \baire \to \baire$. Hence $V(\alpha) = \ocode^{\ca{X}}(\alpha)$ for all $\alpha \in \baire$. We take $u^\ca{X}_{\pointcl}(\alpha) = S(\ep,\alpha)$, $\alpha \in \baire$.

Finally $H(\alpha) = G^\ca{X}_{\pointcl}(\alpha) \triangle V(\alpha) = G^\ca{X}_{\pointcl}(\alpha) \triangle \ocode^\ca{X}(u^\ca{X}_{\pointcl}(\alpha))$ for all $\alpha \in \baire$, hence the set $G^\ca{X}_{\pointcl}(\alpha) \triangle \ocode^\ca{X}(u^\ca{X}_{\pointcl}(\alpha))$ is meager for almost all $\alpha \in \baire$.
\end{proof}

We thus we obtain

\begin{proposition}[Axiom of $\bolds^1_n$ Determinacy for $n >1$]
\label{proposition continuous transition analytic}
For every Polish space \ca{X} and every $\n$ with $n \geq 1$ there exists a continuous function $u^ \ca{X}_n : \baire \to \baire$ such that for almost all $\alpha \in \baire$ the set 
\[
\scode^\ca{X}_{n}(\alpha) \triangle \ocode^\ca{X}(u^ \ca{X}_n(\alpha))
\]
is meager.
\end{proposition}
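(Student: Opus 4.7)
The plan is to invoke Lemma~\ref{lemma gamma has the Baire property} with $\pointcl = \bolds^1_n$ and with the universal system $(\scode^\ca{Y}_n)_\ca{Y}$ fixed in the introduction. Two hypotheses of that lemma need checking. First, that $(\scode^\ca{Y}_n)_\ca{Y}$ is a universal system for $\bolds^1_n$: this is already recorded immediately after the recursive definition of $\scode^\ca{Y}_n$, where it is noted that an easy induction shows $\scode^\ca{Y}_n$ is universal for $\bolds^1_n\upharpoonright \ca{Y}$ on every Polish space $\ca{Y}$.

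The second hypothesis is that every $\bolds^1_n$ subset of every Polish space has the Baire property. For $n=1$ this is the classical Lusin--Sierpinski theorem, which requires no set-theoretic assumption. For $n \geq 2$, the standing assumption of $\bolds^1_n$-determinacy, combined with the Banach--Mazur--Oxtoby analysis of the (unfolded) Banach--Mazur game cited in the paragraph preceding the statement, delivers the Baire property for every $\bolds^1_n$ subset of any Polish space.

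With both hypotheses in place, Lemma~\ref{lemma gamma has the Baire property} produces a continuous function $u^\ca{X}_n : \baire \to \baire$ such that
\[
\scode^\ca{X}_n(\alpha) \triangle \ocode^\ca{X}(u^\ca{X}_n(\alpha))
\]
is meager for almost all $\alpha \in \baire$, which is precisely the conclusion sought. There is essentially no obstacle in the deduction: the proposition is a direct specialization of Lemma~\ref{lemma gamma has the Baire property}, and the determinacy hypothesis is used only as input to verify the Baire property assumption of that lemma. In particular, one does not need any separate continuous parametrization argument here, because the continuity of $u^\ca{X}_n$ is already extracted in the proof of Lemma~\ref{lemma gamma has the Baire property} from the goodness of the universal system fixed at the start of the paper.
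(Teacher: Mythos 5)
Your proposal is correct and matches the paper exactly: the paper derives this proposition as an immediate corollary of Lemma~\ref{lemma gamma has the Baire property} (the text says simply ``We thus obtain''), with the Baire property for $\bolds^1_1$ coming from Lusin--Sierpinski and for $n>1$ from the determinacy hypothesis via Banach--Mazur--Oxtoby, just as you describe. No further comment is needed.
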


We will show that the preceding results are optimal (Theorem \ref{theorem optimal main}), but before we do this we will be concerned with

\subsection*{The intermediate steps of the Borel-hierarchy} Here we provide the analogous result at the $\xi$-level of the Borel-hierarchy. In these cases one can actually obtain the transition \emph{for all} codes, instead for almost all. On the other hand the complexity of the uniformity functions does not seem to be (and in fact due to Theorem \ref{theorem optimal main} is not) bounded.

\begin{definition}\normalfont
\label{definition of upper bound}
We define the functions $\bdbc, \cind: \om_1 \setminus \{0\} \to \om_1\setminus \{0\}$ by
\begin{align*}
\bdbc(\xi) =& \ \text{the least $\eta < \om_1$ such that} \ \bcodefam_\xi \ \in \boldp^0_\eta,\\
\cind(1) =& \ 1,\\
\cind(\xi) = & \ \sup\set{\max\{\cind(\eta), \bdbc(\eta)+1\}}{1 \leq \eta < \xi} + 1.
\end{align*}
\end{definition}

Here ``$\bdbc$" stands for ``rank of Borel coding" and ``$\cind$" for ``approximation to Borel".  In Proposition \ref{proposition upper bound for bdbc and cind} we give an estimation for the values of the preceding functions.

We recall the partial functions $\rfn{\alpha}^{\om,\baire} \equiv \rfn{\alpha}: \om \rightharpoonup \baire$ given in the Introduction. The following remark summarizes some basic properties of these functions, which are more or less folklore in the area. The proof involves a somewhat long list of computations but nevertheless we put it down for reasons of self-containedness.

\begin{remark}\normalfont
\label{remark the set Def is pi02}
The set
\[
Dom(\alpha,n) \iff \rfn{\alpha}(n) \downarrow
\]
is $\Pi^0_2$ and consequently it is $G_\delta$.\smallskip

Moreover the function
\[
eval: Dom \times \om^2 \to \om: (\alpha,n,k) \mapsto \rfn{\alpha}(n)(k)
\]
is recursive on $Dom \times \om^2$, \ie there is a semirecursive set $R^{eval} \subseteq Dom \times \om^3$ such that for all $(\alpha,n) \in Dom$ and all $k,s \in \om$ we have that
\[
eval(\alpha,n,k) = s \iff R^{eval}(\alpha,n,k,s).
\]
In particular the partial function
\[
\alpha \mapsto \rfn{\alpha} \in {\baire}^\om
\]
is well-defined on $\set{\beta \in \baire}{(\forall n)Def(\beta,n)}$ and continuous on the latter set.
\end{remark}

\begin{proof}
We consider first the recursive functions $\hat{s}: \om^{< \om} \to \om$ and $\check{u}: \om^{<\om} \to \om$ given in the Introduction for which have that
\[
N(\baire,\hat{s}(u)) = V_u = \set{\alpha \in \baire}{(\forall i < \lh(u))[\alpha(i) = u(i)]}
\]
for all $u$, and 
\[
N(\baire,s) = V_{\check{u}(s)}
\]
for all $s$ with $N(\baire,s) \neq \emptyset$. 

Below we denote by $\cn{u}{v}$ the \emph{concatenation} of $u \in \om^{<\om}$ with $v \in \om^{<\om}$, \ie \newline $\cn{u}{v} = (u_0,\dots,u_{\lh(u)-1},v(0),\dots,v_{\lh(v)-1})$.\smallskip

\emph{Claim.} For all $\alpha,n$ we have $\rfn{\alpha}(n) \downarrow$ exactly when 
\begin{align}
&  \label{equation remark the set Def is pi02 A} \ \ocode^{\om \times \om}(\alpha,n,\hat{s}(\emptyset)) \ \&\\ \nonumber
& \  (\forall u \in \om^{< \om})[\ocode^{\om \times \om}(\alpha,n,\hat{s}(u)) \ \longrightarrow \ (\exists i)\ocode^{\om \times \om}(\alpha,n,\hat{s}({\cn{u}{(i)}}))] \ \&\\ \nonumber
& \  (\forall u \in \om^{< \om})(\forall i, j)[\left(\ocode^{\om \times \om}(\alpha,n,\hat{s}({\cn{u}{(i)}})) \ \& \ \ocode^{\om \times \om}(\alpha,n,\hat{s}({\cn{u}{(j)}}))\right) \ \longrightarrow \ i = j] \ \&\\ \nonumber
& \  (\forall u \in \om^{<\om})(\forall i \in \om)[\ocode^{\om \times \om}(\alpha,n,\hat{s}(\cn{u}{(i)})) \ \longrightarrow \ \ocode^{\om \times \om}(\alpha,n,\hat{s}(u))] \ \&\\ \nonumber
& \ (\forall s)[\ocode^{\om \times \om}(\alpha,n,s) \ \longrightarrow \ ((s)_1)_0 \neq 0] \ \&\\ \nonumber
& \ (\forall s \in \om)[((s)_1)_0 \neq 0 \ \& \ \ocode^{\om \times \om}(\alpha,n,\hat{s}(\check{u}(s))) \ \longrightarrow \ \ocode^{\om \times \om}(\alpha,n,s)] \ \&\\ \nonumber
& \ (\forall s \in \om)[((s)_1)_0 \neq 0 \ \& \ \ocode^{\om \times \om}(\alpha,n,s) \ \longrightarrow \ \ocode^{\om \times \om}(\alpha,n,\hat{s}(\check{u}(s)))].
\end{align}
The first three conjuncts of (\ref{equation remark the set Def is pi02 A}) give us a way to determine the value of $\rfn{\alpha}(n)$ step by step, by picking each time the unique $i$ for which $\ocode^{\om \times \om}(\alpha,n,\hat{s}({\cn{u}{(i)}}))$ holds. The next one ensures that we can pass to smaller neighborhoods and the fifth conjunct says that $N(\baire,s)$ is non-empty whenever $\ocode^{\om \times \om}(\alpha,n,s)$ holds. The sixth conjunct allows us to pass from $\ocode^{\om \times \om}(\alpha,n,\hat{s}(u))$ to $\ocode^{\om \times \om}(\alpha,n,s)$ for some specific (but sufficiently many) choices of $u$, whenever $N(\baire,s) \neq \emptyset$ equivalently whenever $((s)_1)_0 \neq 0$. Finally the last conjunct is the converse transition. (The necessity for these conditions will become clear below.)

It is not hard to verify that condition (\ref{equation remark the set Def is pi02 A}) defines a $\Pi^0_2$ subset of $\baire  \times \om$, so if we prove the claim we are done with the first assertion.\smallskip

\emph{Proof of the claim.} Assume first that $\rfn{\alpha}(n)$ is defined, so that there exists a unique $\beta \in \baire$ such that for all $s \in \om$, $\beta \in N(\baire,s) \iff \ocode^{\om \times \om}(\alpha,n,s)$.

Since $\beta \in \baire = N(\baire,\hat{s}(\emptyset))$ we have $\ocode^{\om \times \om}(\alpha,n,\hat{s}(\emptyset))$. Assume that $\ocode^{\om \times \om}(\alpha,n,\hat{s}(u))$ holds for some $u \in \om^{< \om}$. Then $\beta \in N(\baire,\hat{s}(u))$, \ie $u$ is an initial segment of $\beta$. We take $i = \beta(\lh(u))$ so that $\cn{u}{(i)}$ is also an initial segment of $\beta$, which means that $\beta \in N(\baire,\hat{s}(\cn{u}{(i)}))$. Thus $\ocode^{\om \times \om}(\alpha,n,\hat{s}({\cn{u}{(i)}}))$. Now assume that both conditions $\ocode^{\om \times \om}(\alpha,n,\hat{s}({\cn{u}{(i)}}))$ and $\ocode^{\om \times \om}(\alpha,n,\hat{s}({\cn{u}{(j)}}))$ hold for some $u, i, j$. Then $\cn{u}{(i)}$ and $\cn{u}{(j)}$ are initial segments of $\beta$ and so $i = \beta(\lh(u)) = j$. 

For the next conjunct of (\ref{equation remark the set Def is pi02 A})  assume that $\ocode^{\om \times \om}(\alpha,n,\hat{s}(\cn{u}{(i)})$ holds. Then from our hypothesis $\beta \in N(\baire,\hat{s}(\cn{u}{(i)}) = V_{\cn{u}{(i)}} \subseteq V_{u} = N(\baire,\hat{s}(u))$. Hence $\ocode^{\om \times \om}(\alpha,n,\hat{s}(u)$ holds as well.

Moreover it is clear that $\ocode^{\om \times \om}(\alpha,n,s)$ implies that $N(\baire,s) \neq \emptyset$ as it contains $\beta$.

Finally assume that $s \in \om$ satisfies $((s)_1)_0 \neq 0$, \ie $N(\baire,s) \neq \emptyset$. Then $N(\baire,\hat{s}(\check{u}(s))) = V_{\check{u}(s)} = N(\baire,s)$. So
\[
\ocode^{\om \times \om}(\alpha,n,\hat{s}(\check{u}(s))) \iff \beta \in N(\baire,\hat{s}(\check{u}(s))) \iff N(\baire,s) \iff \ocode^{\om \times \om}(\alpha,n,s).
\]
This proves the last two conjuncts of condition (\ref{equation remark the set Def is pi02 A})  and the left-to-right-hand implication of our claim is settled.

For the converse direction, we define by recursion
\[
\beta(k) = \ \text{the unique $i \in \om$ such that} \ \ocode^{\om \times \om}(\alpha,n,\hat{s}(\cn{(\beta(0),\dots,\beta(k-1))}{(i)})),
\]
where $k \in \om$. From our hypothesis $\beta$ is well-defined and $\ocode^{\om \times \om}(\alpha,n,\hat{s}(u))$ for all $u$, which are initial segments of $\beta$.

We need to show that $\beta \in N(\baire,s) \iff \ocode^{\om \times \om}(\alpha,n,s)$ for all $s$, and that $\beta$ is the unique such point in the Baire space.

Assume first that $\beta$ is a member of $N(\baire,s)$. In particular the latter set is not empty and so from our preceding remarks we have that $N(\baire,s) = V_{\check{u}(s)} = N(\baire,\hat{s}(\check{u}(s)))$. Thus $\check{u}(s)$ is an initial segment of $\beta$ and hence $\ocode^{\om \times \om}(\alpha,n,\hat{s}(\check{u}(s)))$ holds. From the sixth conjunct of condition (\ref{equation remark the set Def is pi02 A}) it follows that $\ocode^{\om \times \om}(\alpha,n,s)$ holds as well. Conversely if $\ocode^{\om \times \om}(\alpha,n,s)$ holds then from the fifth conjunct $N(\baire,s) \neq \emptyset$ and so $N(\baire,s) = V_{\check{u}(s)}$. We show by induction on $k < \lh(\check{u}(s))$ that $\check{u}(s)(i) = \beta(i)$ for all $i \leq k$. Let $k < \lh(\check{u}(s))$ and assume that $\beta(i)= \check{u}(s)(i)$ for all $i < k$. From the last conjunct of (\ref{equation remark the set Def is pi02 A})  we have $\ocode^{\om \times \om}(\alpha,n,\hat{s}(\check{u}(s)))$ and from the fourth one it follows that $\ocode^{\om \times \om}(\alpha,n,\hat{s}(\check{u}(s) \upharpoonright (k+1)))$, \ie $\ocode^{\om \times \om}(\alpha,n,\hat{s}(\beta(0),\dots,\beta(k-1),\check{u}(s)(k)))$. Since by the definition of $\beta$ we have $\ocode^{\om \times \om}(\alpha,n,\hat{s}(\beta(0),\dots,\beta(k-1),\beta(k)))$ it follows from the third conjunct of (\ref{equation remark the set Def is pi02 A}) that $\check{u}(s)(k) = \beta(k)$. This completes the inductive step and so $\check{u}(s)$ is an initial segment of $\beta$. Thus $\beta \in V_{\check{u}(s)} = N(\baire,s)$.

Therefore we have shown that $\beta \in N(\baire,s) \iff \ocode^{\om \times \om}(\alpha,n,s)$ for all $s$. To show the uniqueness assume that $\beta'$ satisfies the latter equivalence. As before we show by induction that $\beta(k) = \beta'(k)$ for all $k$. Suppose that $\beta (i) = \beta' (i)$ for all $i < k$. Evidently 
\[
\beta' \in N(\baire,\hat{s}((\beta'(0),\dots,\beta'(k-1),\beta'(k))) = N(\baire,\hat{s}((\beta(0),\dots,\beta(k-1),\beta'(k))).
\]
By our assumption about $\beta'$ it follows $\ocode^{\om \times \om}(\alpha,n,\hat{s}((\beta(0),\dots,\beta(k-1),\beta'(k))))$. Applying the same argument to $\beta$ yields $\ocode^{\om \times \om}(\alpha,n,\hat{s}((\beta(0),\dots,\beta(k-1),\beta(k))))$. It follows from the third conjunct of (\ref{equation remark the set Def is pi02 A}) that $\beta(k) = \beta'(k)$.

This finishes the proof of the claim.\smallskip

Regarding the second assertion, we have for all $(\alpha,n) \in Dom$ and all $k,s \in \om$,
\begin{align*}
eval(\alpha,n,k) = s 
\iff& \ \rfn{\alpha}(n)(k) = s\\
\iff& \ (\exists u \in \om^{< \om})[\lh(u) = k \ \& \ u_{k-1} = s \ \& \ \rfn{\alpha}(n) \in N(\baire, \hat{s}(u))]\\
\iff& \ (\exists u \in \om^{< \om})[\lh(u) = k \ \& \ u_{k-1} = s \ \& \ \ocode^{\om \times \om}(\alpha,n, \hat{s}(u)))]
\end{align*}
where in the last equivalence we use equality (\ref{equation compute partial function}) from the Introduction.

The latter condition defines a $\Sigma^0_1$ subset of $\baire \times \om^3$, from which it follows easily that the partial function $eval$ is recursive on its domain.

The last assertion follows easily from the fact that semirecursive sets are open.
\end{proof}

\begin{proposition}
\label{proposition upper bound for bdbc and cind}
For all countable $\xi \geq 1$ we have that 
\[
\bdbc(\xi) \leq \xi+1 \quad \text{and} \quad \cind(\xi) \leq \xi+2.
\]
\end{proposition}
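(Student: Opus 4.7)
The plan is transfinite induction on $\xi \geq 1$, treating both bounds together. The $\cind$ bound follows from the $\bdbc$ bound by pure ordinal arithmetic: once $\bdbc(\eta)\leq \eta+1$ and $\cind(\eta)\leq \eta+2$ hold for every $\eta<\xi$, we have $\max\{\cind(\eta),\bdbc(\eta)+1\}\leq \eta+2$, so the supremum over $1\leq \eta<\xi$ equals $\beta+2 = \xi+1$ when $\xi=\beta+1$ is a successor and equals $\xi$ when $\xi$ is a limit (since $\eta+2<\xi$ for all $\eta<\xi$); in either case $\cind(\xi)\leq \xi+2$, and $\cind(1)=1\leq 3$ is immediate. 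The whole argument thus reduces to showing $\bcodefam_\xi \in \boldp^0_{\xi+1}$.

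For $\xi=1$, unfolding the definition gives
\[
\alpha\in\bcodefam_1 \iff \alpha(0)=1 \ \&\ (\forall n)\bigl[\rfn{\alpha^\ast}(n)\downarrow \ \&\ \rfn{\alpha^\ast}(n)(0)=0\bigr].
\]
By Remark \ref{remark the set Def is pi02}, the set $D := \{(\alpha,n) : \rfn{\alpha^\ast}(n)\downarrow\}$ is $\Pi^0_2$ and the evaluation map is recursive on $D$, so the $(\forall n)$ clause defines a countable intersection of $\Pi^0_2$ sets, hence a $\Pi^0_2$ set; intersecting with the clopen condition $\alpha(0)=1$ keeps us in $\Pi^0_2$, so $\bdbc(1)\leq 2$.

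For the inductive step, set $A_\xi := \bigcup_{\zeta<\xi}\bcodefam_\zeta$. When $\xi=\beta+1$ is a successor, the nesting property $\bcodefam_\zeta \subseteq \bcodefam_\beta$ for $1\leq \zeta\leq \beta$ stated in the excerpt collapses the union to $\bcodefam_0 \cup \bcodefam_\beta$, which is in $\boldp^0_{\beta+1} = \boldp^0_\xi$ by the inductive hypothesis. When $\xi$ is a countable limit, $\zeta+1 < \xi$ for every $\zeta<\xi$, so each $\bcodefam_\zeta \in \boldp^0_{\zeta+1}\subseteq \bolds^0_\xi$ by induction and $A_\xi$ is a countable union of such sets, hence $A_\xi \in \bolds^0_\xi$.

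Now apply the continuity part of Remark \ref{remark the set Def is pi02}: the map $f(\alpha,n):=\rfn{\alpha^\ast}(n)$ is continuous on $D$. By the standard fact that continuous maps from a subspace pull back class-$\Gamma$ Borel sets to sets of the form $W\cap D$ with $W\in\Gamma$ in the ambient space, one obtains $W_\xi$ -- in $\boldp^0_\xi$ in the successor case and in $\bolds^0_\xi$ in the limit case -- with $f^{-1}[A_\xi] = W_\xi \cap D$. Unfolding the definition of $\bcodefam_\xi$ yields
\[
\bcodefam_\xi = \{\alpha:\alpha(0)=1\} \cap \bigcap_n\{\alpha:(\alpha,n)\in D\} \cap \bigcap_n\{\alpha:(\alpha,n)\in W_\xi\}.
\]
The first factor is clopen and the second is $\Pi^0_2$, both absorbed into $\boldp^0_{\xi+1}$. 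The third factor is $\boldp^0_\xi$ in the successor case (closure of $\boldp^0_\xi$ under countable intersections) and $\boldp^0_{\xi+1}$ in the limit case (countable intersection of $\bolds^0_\xi$ sets). In either case $\bcodefam_\xi \in \boldp^0_{\xi+1}$, completing the induction. The main point requiring care is the limit case, where countability of $\xi$ is used so that $A_\xi$ stays in $\bolds^0_\xi$ and where one must track the $\bolds^0_\xi$ vs $\boldp^0_\xi$ split carefully; the $\Pi^0_2$ domain condition is otherwise harmless for $\xi\geq 1$.
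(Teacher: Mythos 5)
Your proof is correct and follows essentially the same route as the paper's: transfinite induction on $\xi$, with the base case handled via Remark~\ref{remark the set Def is pi02}, a limit/successor case split using countability of $\xi$ to keep the relevant union in $\bolds^0_\xi$, and the $\cind$ bound reduced to ordinal arithmetic from the $\bdbc$ bound. The only differences are organizational (you pull back the single set $A_\xi=\bigcup_{\zeta<\xi}\bcodefam_\zeta$ under the continuous evaluation map rather than writing $\bcodefam_\xi=\bigcap_n\bigcup_m A_{m,n}$ directly), and you are in fact slightly more careful than the paper in explicitly accounting for the $\bcodefam_0$ component of that union, which is harmless since it is clopen.
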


\begin{proof}
By induction on $\xi$. It is evident that $\cind(1) \leq 3$. Also from Remark \ref{remark the set Def is pi02} and the continuity of the function $(\alpha \mapsto \alpha^\ast)$ it follows that the set $\bcodefam_1$ is $\boldp^0_2$ and hence $\bdbc(1) = 2$.

Suppose that $1 < \xi < \om_1$ and assume that the inequalities are true for all $1 \leq \eta < \xi$. We first assume that $\xi$ is a limit ordinal and we fix an enumeration $(\xi_m)_{m \in \om}$ of $\xi$. 

Clearly $\bcodefam_\xi = \cap_n\cup_m A_{m,n}$, where 
\[
A_{m,n} = \set{\alpha \in \baire}{\alpha(0) = 1 \ \& \ \rfn{\alpha^\ast}(n) \downarrow \ \& \ \rfn{\alpha^\ast}(n) \in \bcodefam_{\xi_m}}.
\]
From our inductive hypothesis we  have that $\bdbc(\xi_m) \leq \xi_m+1$ for all $m$, and so from Remark \ref{remark the set Def is pi02} each $A_{m,n}$ is a $\boldp^0_{\max\{\xi_m+1,2\}}$ set. Since $\xi$ is limit with $\xi > \xi_m$ for all $m$, it follows that $\xi > \max\{\xi_m+1,2\}$ for all $m$, and so the union $\cup_m A_{m,n}$ is $\bolds^0_\xi$ for all \n. Hence $\bcodefam_\xi$ is $\boldp^0_{\xi+1}$ and so $\bdbc(\xi) \leq \xi+1$.

It also follows from our inductive hypothesis that
$
\max\{\cind(\eta), \bdbc(\eta)+1\} \leq \eta+2
$
for all $\eta < \xi$ and using again that $\xi$ is limit, 
\[
\cind(\xi) \leq \sup\set{\eta+2}{1 \leq \eta < \xi} + 1 = \xi+1 < \xi +2.
\]
If $\xi$ is a successor, say $\xi = \eta_0+1$, with $\eta_0 \geq 1$ then
\[
\bcodefam_\xi = \cap_n \set{\alpha \in \baire}{\alpha(0) = 1 \ \& \ \rfn{\alpha^\ast}(n) \downarrow \ \& \ \rfn{\alpha^\ast}(n) \in \bcodefam_{\eta_0}}.
\]
It follows from our inductive hypothesis and Remark \ref{remark the set Def is pi02} that $\bcodefam_\xi$ is the countable intersection of $\boldp^0_{\eta_0+1}$ sets. Therefore $\bcodefam_\xi$ is in $\boldp^0_{\eta_0+1} = \boldp^0_\xi \subseteq \boldp^0_{\xi+1}$, \ie $\bdbc(\xi) \leq \xi+1$. Moreover from our inductive hypothesis we have
$
\max\{\cind(\eta), \bdbc(\eta)+1\} \leq \eta+2
$
and so
$\cind(\xi) \leq \sup\set{\eta+2}{1 \leq \eta < \xi} + 1 = (\eta_0+2)+1 = \xi+2$.
\end{proof}

\begin{question}\normalfont
\label{question estimation of upper bounds}
The preceding upper bounds are not the best possible in general. For example $\bdbc(n) \leq 2$ for all $\n$ and $\bdbc(\om) \leq 3$. It would be interesting to give a precise estimation for $\bdbc(\xi)$ and $\cind(\xi)$.
\end{question}

The analogous result about the Baire property at every $\bolds^0_\xi$-level is given with the help of the preceding functions. Recall the sets $\bcode_\xi^\ca{X}$ given in the Introduction, which parametrize $\bolds^0_\xi \upharpoonright \ca{X}$.

\begin{theorem}
\label{theorem transition from Borel to open codes refined}
For every Polish space \ca{X} and every $1 \leq \xi < \om_1$ there exists a $\bolds^0_{\cind(\xi)}$-measurable function
\[
c^\ca{X}_\xi: \bcodefam_\xi \to \baire
\]
such that the set
\[
\bcode_\xi^\ca{X}(\alpha) \triangle \ocode^\ca{X}(c^\ca{X}_\xi(\alpha))
\]
is meager for all $\alpha \in \bcodefam_\xi$.\smallskip

Moreover one can choose the family $(c^\ca{X}_{\xi})_\xi$ in such a way that $c^{\ca{X}}_\xi \upharpoonright \bcodefam_\eta = c^\ca{X}_\eta$ for all $2 \leq \eta < \xi$.
\end{theorem}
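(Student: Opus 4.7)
The plan is transfinite induction on $\xi$. For the base case $\xi=1$, I observe that $\pi^\ca{X}_1(\alpha) = \bigcup_n N(\ca{X}, \rfn{\alpha^\ast}(n)(1))$ is already an open set, since each $\rfn{\alpha^\ast}(n) \in \bcodefam_0$ and $\pi^\ca{X}_0(\beta) = \ca{X} \setminus N(\ca{X},\beta(1))$. So I define $c^\ca{X}_1(\alpha)(n) := \rfn{\alpha^\ast}(n)(1)$, which makes the symmetric difference \emph{empty}. Continuity of $c^\ca{X}_1$ on $\bcodefam_1$ follows from Remark~\ref{remark the set Def is pi02}, which provides continuity of $\alpha \mapsto \rfn{\alpha^\ast}$ on that set; and $\cind(1)=1$ matches the required complexity.

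For the inductive step at $\xi>1$, given $\alpha \in \bcodefam_\xi$ with $\beta_n := \rfn{\alpha^\ast}(n) \in \bcodefam_{\zeta_n}$ for some $\zeta_n<\xi$, I fix in advance an enumeration $(\xi_m)_m$ cofinal in $\xi$ (or simply $\xi_0$ when $\xi=\xi_0+1$ is a successor), and carry out the following moves: (i) for each $n$, locate the least $m$ such that $\beta_n \in \bcodefam_{\xi_m}$, using the $\boldp^0_{\bdbc(\xi_m)}$-tests guaranteed by the definition of $\bdbc$; (ii) apply the inductively given $c^\ca{X}_{\xi_m}$ to $\beta_n$ to obtain an open code $\gamma_n$ of an open set $V_n$ with $V_n \triangle \pi^\ca{X}(\beta_n)$ meager; (iii) pass from $\gamma_n$ to an open code $\delta_n$ of $\mathrm{int}(\ca{X}\setminus V_n) = \ca{X}\setminus\overline{V_n}$ via a fixed bounded-Borel transformation on open codes; (iv) interleave the $\delta_n$'s to produce $c^\ca{X}_\xi(\alpha)$, an open code of $\bigcup_n (\ca{X}\setminus\overline{V_n})$. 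Since a closed set differs from its interior by a nowhere dense set, composing with the inductive meager-equivalences yields that the resulting open set is meager-equivalent to $\pi^\ca{X}_\xi(\alpha)$.

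The complexity bound $\bolds^0_{\cind(\xi)}$ is engineered exactly by the formula defining $\cind$: step (ii) contributes $\cind(\xi_m)$ from the inductive hypothesis; step (i) contributes $\bdbc(\xi_m)+1$, the ``$+1$'' absorbing the least-$m$ search over $\boldp^0_{\bdbc(\xi_m)}$-tests; and steps (iii)--(iv) together contribute the outer ``$+1$'' appearing in the formula. For the coherence clause, I would use the same recipe with the same fixed enumeration at every stage of the induction, so that for $\alpha \in \bcodefam_\eta$ with $2 \leq \eta < \xi$ the definition of $c^\ca{X}_\xi(\alpha)$ collapses word-for-word to that of $c^\ca{X}_\eta(\alpha)$; the restriction $\eta \geq 2$ reflects the asymmetry that $c^\ca{X}_1$ is defined by a direct formula returning the \emph{exact} open set, whereas the meager-equivalence recipe used for $\eta \geq 2$ generally produces a different (but equivalent) open code.

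The main obstacle is the measurability bookkeeping of step (i) at limit $\xi$: the least-$m$ search runs through a sequence of tests whose complexities $\bdbc(\xi_m)$ are cofinal in $\{\bdbc(\eta) : \eta < \xi\}$, and is followed by a branching into functions $c^\ca{X}_{\xi_m}$ of varying $\cind(\xi_m)$-complexity; the total must be shown to stay inside $\bolds^0_{\cind(\xi)}$, which is precisely what the somewhat awkward definition of $\cind$ is tailored for. Once that verification is in place, the remaining ingredients — the interior/closure arithmetic for closed sets and the Kuratowski--Ulam-style closure of meager sets under countable unions — are entirely standard.
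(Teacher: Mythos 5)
Your proposal is correct and follows essentially the same route as the paper: an exact open code at level $1$, and at higher levels the composition of the inductively obtained codes with a bounded-complexity map sending an open code of $V$ to an open code of $\mathrm{int}(\ca{X}\setminus V)$, followed by a coded union, with the definition of $\cind$ absorbing both the $\boldp^0_{\bdbc(\eta)}$ membership tests and the inductive $\bolds^0_{\cind(\eta)}$ contributions. The one ingredient you assert rather than prove is the $\bolds^0_2$-measurability of that open-code transformation --- this is exactly the paper's Lemma~\ref{lemma from closed to open}, proved via a Kuratowski topology refinement --- and the paper sidesteps the enumeration-dependence in your step (i) by indexing with the exact rank $|\rfn{\alpha^\ast}(n)|$ and quantifying over all $\eta<\xi$ in the complexity computation, which also makes the coherence clause immediate.
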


We prove first the following auxiliary lemma. 

\begin{lemma}
\label{lemma from closed to open}
For every Polish space \ca{X}, there is a $\bolds^0_2$-measurable function\newline$\cfo^\ca{X}: \baire \to \baire$ such that the set
\[
\fcode^{\ca{X}}(\alpha) \triangle \ocode^{\ca{X}}(\cfo^\ca{X}(\alpha))
\]
is meager for all $\alpha \in \baire$.\smallskip

{\normalfont (Here ``$\cfo$" stands for ``approximation to closed".)}
\end{lemma}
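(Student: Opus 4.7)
The plan is to take $\cfo^\ca{X}(\alpha)$ to be a code for the interior of the closed set $\fcode^\ca{X}(\alpha) = \ca{X} \setminus V$, where $V := \ocode^\ca{X}(\alpha)$; that is, for the open set $\ca{X} \setminus \overline{V}$. This is the right candidate because
\[
(\ca{X} \setminus V) \triangle (\ca{X} \setminus \overline{V}) = \overline{V} \setminus V,
\]
and the boundary of any open set is closed with empty interior, hence nowhere dense and in particular meager; the meagerness clause of the lemma is therefore automatic for this choice.

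To produce the code in a transparent form, write $\ca{X} \setminus \overline{V}$ as the union of all basic neighborhoods disjoint from $V$, and define
\[
\cfo^\ca{X}(\alpha)(n) = \begin{cases} n, & \text{if } N(\ca{X}, n) \cap \ocode^\ca{X}(\alpha) = \emptyset, \\ s^\ast, & \text{otherwise,} \end{cases}
\]
where $s^\ast$ is any fixed natural with $q_{(s^\ast)_1} = 0$, so that $N(\ca{X}, s^\ast) = \emptyset$. By construction $\ocode^\ca{X}(\cfo^\ca{X}(\alpha)) = \bigcup\{N(\ca{X}, n) : N(\ca{X}, n) \cap V = \emptyset\} = \ca{X} \setminus \overline{V}$, and the meagerness of the symmetric difference follows from the previous paragraph.

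The remaining task, and the main (minor) obstacle, is verifying $\bolds^0_2$-measurability. Using the dense sequence $(r^\ca{X}_j)_{\iin}$ to test non-emptiness of the open set $N(\ca{X}, n) \cap N(\ca{X}, \alpha(i))$, I would rewrite
\[
N(\ca{X}, n) \cap \ocode^\ca{X}(\alpha) = \emptyset \iff (\forall i, j \in \om)\bigl(r^\ca{X}_j \notin N(\ca{X}, n) \cap N(\ca{X}, \alpha(i))\bigr).
\]
Each inner clause depends only on the single coordinate $\alpha(i)$ and on a recursive predicate among naturals, hence is clopen in $\alpha$; the conjunction is therefore $\boldp^0_1$ for each fixed $n$. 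For a basic neighborhood $V_u \subseteq \baire$, the preimage $(\cfo^\ca{X})^{-1}[V_u]$ is a finite Boolean combination of these $\boldp^0_1$ sets and their complements, hence lies in $\boldd^0_2 \subseteq \bolds^0_2$, and taking countable unions yields $\bolds^0_2$ preimages for arbitrary open sets in $\baire$. The only bookkeeping subtlety is matching, for each index $i < \lh(u)$, the value $u(i)$ against the two possible values $i$ and $s^\ast$, which is routine.
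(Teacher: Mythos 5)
Your proof is correct, and it takes a genuinely more elementary route than the paper while aiming at the same target set: both arguments code the interior of $\fcode^{\ca{X}}(\alpha)$, i.e.\ $\ca{X}\setminus\overline{\ocode^{\ca{X}}(\alpha)}$, and both rely on the closed sets $C_n=\set{\alpha}{N(\ca{X},n)\cap\ocode^{\ca{X}}(\alpha)=\emptyset}$. The paper, however, does not write down the code explicitly: it invokes Kuratowski's topology-refinement theorem to get a finer zero-dimensional Polish topology $\ca{T}_\infty\subseteq\bolds^0_2$ in which every $C_n$ is clopen, transfers to a closed copy of $(\baire,\ca{T}_\infty)$ inside $\baire$ via a homeomorphism $h$, and extracts the code from the goodness of the universal system, with the $\bolds^0_2$-measurability coming from that of $h^{-1}$. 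You instead define the code coordinatewise ($\cfo^{\ca{X}}(\alpha)(n)=n$ if $\alpha\in C_n$, and a code of the empty ball otherwise) and check measurability by hand; your verification that each $C_n$ is closed (each clause depends only on the single coordinate $\alpha(i)$, hence is clopen, and $C_n$ is a countable intersection of such clauses) is a legitimate alternative to the paper's covering argument, and the preimage computation (finite Boolean combinations of $\boldp^0_1$ sets lie in $\boldd^0_2$, countable unions of these are $\bolds^0_2$) is sound. Your approach buys explicitness — in particular it effectivizes immediately to a $\Sigma^0_2$-recursive function on recursively presented spaces — whereas the paper's change-of-topology technique is the general-purpose tool that would still work if the sets $C_n$ had higher Borel complexity. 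One tiny caveat: the phrase ``recursive predicate among naturals'' is neither needed nor justified for an arbitrary Polish space; clopenness of each inner clause already follows from its dependence on the single coordinate $\alpha(i)$, as you also observe.
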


\begin{proof}
Let us fix for the discussion a closed set $F \subseteq \ca{X}$. It is not hard to verify that the set $F \setminus F^\circ$ is meager, where $F^\circ$ is the interior of $F$. So, given a code for $F$, we need to find an open code for $F^\circ$ in a $\bolds^0_2$-way.  Clearly we have that
\[
x \in F^\circ \iff (\exists s \in \om)[x \in N(\ca{X},s) \ \& \ N(\ca{X},s) \subseteq F].
\]
We now consider the relation $P \subseteq \baire \times \ca{X}$ defined by
\[
P(\alpha,x) \iff (\exists s \in \om)[x \in N(\ca{X},s) \ \& \ N(\ca{X},s) \subseteq \ca{X} \setminus \ocode^{\ca{X}}(\alpha)].
\]
It is clear that $P(\alpha)$ is the interior of $\ca{X} \setminus \ocode^{\ca{X}}(\alpha)$, for all $\alpha \in \baire$. For all $s \in \om$ we put 
\begin{align*}
C_s 
:=& \ \set{\alpha \in \baire}{N(\ca{X},s) \subseteq \ca{X} \setminus \ocode^{\ca{X}}(\alpha)}\\
 =& \ \set{\alpha \in \baire}{N(\ca{X},s) \cap \ocode^{\ca{X}}(\alpha) = \emptyset},
\end{align*}
so that
\begin{align}\label{equation lemma from closed to open A}
P(\alpha,x) \iff  (\exists s \in \om)[x \in N(\ca{X},s) \ \& \ \alpha \in C_s].
\end{align}
Then each $C_s$ is a closed set. To see this let $\alpha \not \in C_s$, \ie $N(\ca{X},s) \cap \ocode^{\ca{X}}(\alpha) \neq \emptyset$. Consider $x \in N(\ca{X},s)$ such that $(\alpha,x) \in \ocode^{\ca{X}}$. Since the latter set is open there are open sets $V \subseteq \baire$ and $W \subseteq \ca{X}$ such that $(\alpha,x) \in V \times W \subseteq \ocode^{\ca{X}}$. In particular $x \in N(\ca{X},s) \cap \ocode^{\ca{X}}(\beta) \neq \emptyset$ for all $\beta \in V$. Hence $\alpha \in V \subseteq \baire \setminus C_s$, and so the complement of $C_s$ is open.

Let us denote by $\ca{T}_{\baire}$ the usual topology on the Baire space. From a well-known result of Kuratowski \cite[22.18]{kechris_classical_dst} there exists a Polish topology $\ca{T}_{\infty}$ on $\baire$ such that $\ca{T}_{\baire} \subseteq \ca{T}_{\infty} \subseteq \bolds^0_2 \upharpoonright (\baire, \ca{T}_{\baire})$, and every $C_s$ is $\ca{T}_{\infty}$-clopen. Moreover we may assume that $\ca{T}_{\infty}$ is zero-dimensional (see the proofs of \cite[13.3, 22.18]{kechris_classical_dst}), so that $(\baire,\ca{T}_{\infty})$ is homeomorphic to a closed subset $F$ of $(\baire,\ca{T}_{\baire})$, see \cite[7.8]{kechris_classical_dst}. In the sequel we consider $F$ with the relative topology inherited from $\ca{T}_{\baire}$. Also, unless stated otherwise, we think of $\baire$ with the usual topology $\ca{T}_{\baire}$.

It follows from (\ref{equation lemma from closed to open A}) that $P$ is an open subset of $(\baire,\ca{T}_{\infty}) \times \ca{X}$. Let $h: F \to (\baire,\ca{T}_{\infty})$ be a homeomorphism. Since $h$ is $\ca{T}_{\infty}$-continuous there is an open set $O \subseteq \baire \times \ca{X}$ such that
\[
P(h(\beta),x) \iff O(\beta,x)
\]
for all $\beta \in F$ and all $x$. We choose some $\ep \in \baire$ such that $O = \ocode^{\ca{X}}(\ep)$, hence for all $x \in \ca{X}$ and all $\beta \in F$ we have
\begin{align}\label{equation lemma from closed to open B}
P(h(\beta),x) \iff \ocode^{\baire \times \ca{X}}(\ep,\beta,x) \iff \ocode^{\ca{X}}(S(\ep,\beta),x),
\end{align}
for some continuous function $S: \baire \times \baire \to \baire$. Finally we define
\[
\cfo^\ca{X}: \baire \to \baire: \alpha \mapsto S(\ep,h^{-1}(\alpha)).
\]
Since $h^{-1}: (\baire,\ca{T}_{\infty}) \to F$ is continuous and $\ca{T}_{\infty} \subseteq \bolds^0_2 \upharpoonright (\baire, \ca{T}_{\baire})$ it follows that $h^{-1} : \baire \to \baire$ is $\bolds^0_2$-measurable. Therefore $\cfo^\ca{X}$ is a $\bolds^0_2$-measurable function.

To check the key property of $\cfo^\ca{X}$, given $\alpha \in \baire$ and $x \in \ca{X}$ we have
\begin{align*}
x \in \left(\fcode^{\ca{X}}(\alpha)\right)^\circ 
\iff& \ x \in \left (\ca{X} \setminus \ocode^{\ca{X}}(\alpha)\right)^\circ\\ 
\iff& \ P(\alpha,x)\\
\iff& \ocode^{\ca{X}}(S(\ep,h^{-1}(\alpha)),x)  \hspace*{15mm} \text{(from (\ref{equation lemma from closed to open B}))}\\
\iff& \ocode^{\ca{X}}(\cfo^\ca{X}(\alpha),x).
\end{align*}
Thus
\begin{align*}
x \in \fcode^{\ca{X}}(\alpha) \triangle \ocode^{\ca{X}}(\cfo^\ca{X}(\alpha))
\iff& \ x \in \fcode^{\baire}(\alpha) \triangle \left (\fcode^{\ca{X}}(\alpha)\right)^\circ\\
\iff& \ x \in \fcode^{\baire}(\alpha) \setminus \left (\fcode^{\ca{X}}(\alpha)\right)^\circ.
\end{align*}
Since the latter set is meager we are done.
\end{proof}

Now we \emph{prove Theorem \ref{theorem transition from Borel to open codes refined}}. In order to define $c^\ca{X}_\xi$ we need some auxiliary functions. 

First we consider the set $D \subseteq \baire$,
\[
D(\gamma) \iff (\forall n)[\rfn{\gamma}(n) \downarrow].
\]
From Remark \ref{remark the set Def is pi02} it follows that the set $D$ is $\boldp^0_2$.

Next we show that there is a recursive (and therefore continuous) partial function $\tau: \baire \rightharpoonup \baire$, which is defined exactly on $D$ and such that
\begin{align}\label{equation theorem transition from Borel to open codes refined A}
\cup_{\n} \ocode^{\ca{X}}(\rfn{\gamma}(n),x) \iff \ocode^{\ca{X}}(\tau(\gamma),x)
\end{align}
for all $\gamma \in D$ and all $x \in \ca{X}$. 

First compute
\begin{align*}
x \in \cup_{\n} \ocode^{\ca{X}}(\rfn{\gamma}(n))
\iff& (\exists n)[x \in \ocode^{\ca{X}}(\rfn{\gamma}(n))]\\
\iff& \ (\exists n,m)[x \in N(\ca{X},\rfn{\gamma}(n)(m))]\\
\iff& \ (\exists t)[x \in N(\ca{X}, \rfn{\gamma}((t)_0)((t)_1)))]\\
\iff& \ (\exists t)[x \in N(\ca{X}, \tau(\gamma)(t))]\\
\iff& \hspace*{2mm} x \in \ocode^{\ca{X}}(\tau(\gamma)),
\end{align*}
where 
\[
\tau(\gamma)(t) =  \rfn{\gamma}((t)_0)((t)_1) = eval(\gamma,(t)_0,(t)_1),
\]
for $\gamma \in  D$ and $t \in \om$, and $eval$ is as in Remark \ref{remark the set Def is pi02}. It follows easily from the latter remark that the function $\tau$ is recursive on $D$, \ie there is a semirecursive set $R^\tau \subseteq D \times \om$ such that for all $\gamma \in D$ and all $s \in \om$ we have that
\[
\tau(\gamma) \in N(\baire,s) \iff R^\tau(\gamma,s).
\]
In the sequel we consider the infinite countable product $\baire^\om$ of the Baire space with the product topology. To make the following formulas easier to read we will denote its members by $\sq{\alpha_i}{\iin}$ instead of $(\alpha_i)_{\iin}$. Moreover we consider a recursive function $\pi : \baire^\om \to \baire$ such that for all $\sq{\alpha_i}{i \in \om}$ in ${\baire}^\om$ and all \n \ we have that
\begin{align}\label{equation theorem transition from Borel to open codes refined B}
\rfn{\pi(\sq{\alpha_i}{i \in \om})}(n) \downarrow \ \& \ \rfn{\pi(\sq{\alpha_i}{i \in \om})}(n) = \alpha_n.
\end{align}
This is easy to achieve using Kleene's Recursion Theorem: we define
\[
\varphi: \baire \times \baire^\om \times \om \to \baire : \varphi(\ep,\sq{\alpha_i}{i \in \om},n) = \alpha_n,
\]
and we consider a recursive $\ep_0$ is such that
\[
\varphi(\ep_0,\sq{\alpha_i}{i \in \om},n) = \rfn{\ep_0}(\sq{\alpha_i}{i \in \om},n) = \rfn{S(\ep_0,\sq{\alpha_i}{i \in \om})}(n),
\]
for some recursive function $S$. (Here we are using that ${\baire}^\om$ is  recursively isomorphic to $\baire$.) We then set $\pi(\sq{\alpha_i}{i \in \om}) = S(\ep_0,\sq{\alpha_i}{i \in \om})$.\smallskip

\emph{Comment.} This is perhaps the only place in the article, where the $\rfn{\alpha}(n)$-coding of the Borel sets seems to be more appropriate than the $(\alpha)_n$ one, since we apply the Kleene Recursion Theorem, see Remark \ref{remark equivalent coding of Borel sets}. However one can actually obtain the analogous function $\pi$ for the $(\alpha)_n$-coding easily: choose a recursive function $\pi' : {\baire}^\om \to \baire$ such that $\pi'(\sq{\alpha_i}{i \in \om})(\langle t,n \rangle) = \alpha_n(t)$ for all $n,t$. Then $(\pi'(\sq{\alpha_i}{i \in \om}))_n = \alpha_n$ for all $n$.\smallskip

Finally we employ the $\bolds^0_2$-measurable function $\cfo^\ca{X}: \baire \to \baire$ from Lemma \ref{lemma from closed to open}. For simplicity we suppress the superscript \ca{X} in $\cfo^\ca{X}$ and $c^\ca{X}_\xi$ for the remaining of this proof. 

We define by recursion on $\xi \geq 1$ the function $c_\xi : \bcodefam_\xi \to \baire$ as follows:
\begin{align*}
c_1(\alpha) =& \ (n \mapsto \rfn{\alpha^\ast}(n)(1)), \hspace*{35mm} \alpha \in \bcodefam_1,\\
c_\xi(\alpha) =& \ (\tau \circ \pi)\left(\sq{\cfo(c_{|\rfn{\alpha^\ast}(n)|}(\rfn{\alpha^\ast}(n)))}{\n}\right),  \quad \alpha \in \bcodefam_\xi, \ \  \xi > 1.
\end{align*}
The functions $c_\xi$ are well defined: if $\alpha \in \bcodefam_\xi$ with $\xi \geq 1$ then $\rfn{\alpha^\ast}(n)$ is defined for all $n$. Moreover from (\ref{equation theorem transition from Borel to open codes refined B}) the function $\pi$ takes values in $D$, which is the domain of $\tau$.

We establish the required properties of the family $(c_\xi)_{\xi}$ in a series of claims.\smallskip

\emph{Claim 1.} For all $2 \leq \eta \leq \xi$ we have $c_{\xi} \upharpoonright \bcodefam_\eta = c_\eta$.\smallskip

The proof of Claim 1 is clear since $\bcodefam_\eta \subseteq \bcodefam_\xi$ and $c_\xi$ and $c_\eta$ are defined by the same formula.\smallskip

\emph{Claim 2.} For all $1 \leq \xi < \om_1$ the function $c_\xi$ is $\bolds^0_{\cind(\xi)}$-measurable.\smallskip

\emph{Proof of Claim 2.} The fact the $c_1$ is continuous is a direct consequence of the last assertion of Remark \ref{remark the set Def is pi02}. (One can also see that $c_1(\alpha)(n) = eval(\alpha^\ast,n,1)$  for all $\alpha \in \bcodefam_1$ and all $n$.) Hence the case $\xi = 1$ is settled. 

Consider a countable ordinal $\xi > 1$ and assume that the assertion is true for all $1 \leq \eta < \xi$.

We recall that a function $f: \ca{X} \to \baire^\om: f = (f_n)_{\n}$ is $\bolds^0_\zeta$-measurable exactly when every function $f_n$ is $\bolds^0_\zeta$ measurable. Since $\tau$ and $\pi$ are continuous functions it suffices to show that for all \n \ the function
\[
f_n: \bcodefam_\xi \to \baire: \alpha \mapsto \cfo(c_{|\rfn{\alpha^\ast}(n)|}(\rfn{\alpha^\ast}(n)))
\]
is $\bolds^0_{\cind(\xi)}$-measurable.

In the remaining of the proof we fix some $n \in \om$. We need to find a $\bolds^0_{\cind(\xi)}$ set $R$ such that for all $\alpha \in \bcodefam_\xi$ and all $i,j \in \om$ we have
\[
R(\alpha,i,j) \iff f_n(\alpha)(i) = j.
\]
Consider the set $P^{\cfo} \subseteq \baire \times \om \times \om$ defined by
\[
P^{\cfo}(\beta,i,j) \iff \cfo(\beta)(i) = j.
\]
Since $\cfo$ is a $\bolds^0_2$-measurable function the set $P$ is $\bolds^0_2$ (in fact $\boldd^0_2$). As it is well-known $P^{\cfo}$ takes the following form:
\[
P^{\cfo}(\beta,i,j) \iff (\exists t)(\forall m)Q(\overline{\beta}(m),m,i,j)
\]
for some $Q \subseteq \om^4$, where $\overline{\beta}(m) = \langle \beta(0),\dots,\beta(m-1)\rangle$; see \cite[4A.1]{yiannis_dst}. For all $\alpha \in \bcodefam_\xi$ and all $i,j$ we have
\begin{align*}
&\ \cfo(c_{|\rfn{\alpha^\ast}(n)|}(\rfn{\alpha^\ast}(n)))(i) = j \iff\\
\iff& \ P^{\cfo}(\cfo(c_{|\rfn{\alpha^\ast}(n)|}(\rfn{\alpha^\ast}(n))),i,j)\\
\iff& \ (\exists t)(\forall m)(\forall u \in \om^{<\om})\\
& \hspace*{1.5mm} [ u = c_{|\rfn{\alpha^\ast}(n)|}(\rfn{\alpha^\ast}(n)) \upharpoonright m \ \longrightarrow \ Q(\langle u(0),\dots,u(m-1)\rangle,m,i,j)]\\
\iff& \ (\exists t)(\forall m)(\forall u \in \om^{<\om})(\forall \eta < \xi)\\
& \hspace*{1.5mm} \{ [ \rfn{\alpha^\ast}(n) \in \bcodefam_\eta \ \& \ u = c_{\eta}(\rfn{\alpha^\ast}(n)) \upharpoonright m ]\\
& \hspace*{45mm} \ \longrightarrow \ Q(\langle u(0),\dots,u(m-1)\rangle,m,i,j)\},
\end{align*}
where in the last equivalence we used Claim 1. From our inductive hypothesis there is for all $\eta < \xi$ a $\bolds^0_{\cind(\eta)}$ relation $Q^\eta \subseteq \baire \times \om^{< \om} \times \om$ such that
\[
Q^\eta(\beta,u,m) \iff c_{\eta}(\beta) \upharpoonright m = u
\]
for all $\beta \in \bcodefam_\eta$ and all $(u, m) \in \om^{< \om} \times \om$. For all $\eta < \xi$ we define $P^\eta \subseteq \baire \times \om^4 \times \om^{<\om}$ by saying that $P^\eta(\alpha,i,j,t,m,u)$ holds exactly when:
\[
\rfn{\alpha^\ast}(n) \downarrow \ \& \ [ \rfn{\alpha^\ast}(n) \in \bcodefam_\eta \ \& \ Q^\eta(\beta,u,m) ] \ \longrightarrow \ Q(\langle u(0),\dots,u(m-1)\rangle,m,i,j),
\]
so that
\[
\cfo(c_{|\rfn{\alpha^\ast}(n)|}(\rfn{\alpha^\ast}(n)))(i) = j \iff (\exists t)(\forall m)(\forall u \in \om^{<\om})(\forall \eta < \xi)P^\eta(\alpha,i,j,t,m,u)
\]
for all $\alpha \in \bcodefam_\xi$ and all $i,j \in \om$. It is easy to see that $P^\eta$ is a $\boldp^0_{\max\{\bdbc(\eta)+1,\cind(\eta)\}}$ set.

The set $R'$ defined by
\begin{align*}
R'(\alpha,i,j,t)
\iff& \ (\forall m)(\forall u \in \om^{<\om})(\forall \eta < \xi)P^\eta(\alpha,i,j,t,m,u)
\end{align*}
is the  countable intersection of sets in $\boldp^0_{\zeta}$, where 
\[
\zeta = \sup\set{\max\{\bdbc(\eta)+1, \cind(\eta)\}}{1 \leq \eta < \xi},
\]
and so it is a $\boldp^0_\zeta$ set as well. Finally we put
\[
R(\alpha,i,j) \iff (\exists t)R'(\alpha,i,j,t)
\]
and we have that $R \in \bolds^0_{\zeta+1} = \bolds^0_{\cind(\xi)}$. Moreover from the preceding equivalences it follows
\begin{align*}
\cfo(c_{|\rfn{\alpha^\ast}(n)|}(\rfn{\alpha^\ast}(n)))(i) = j
\iff& \ (\exists t)(\forall m)(\forall u \in \om^{<\om})(\forall \eta < \xi)P^\eta(\alpha,i,j,t,m,u)\\
\iff& \ (\exists t) R'(\alpha,i,j,t)\\
\iff& \ R(\alpha,i,j)
\end{align*}
for all $\alpha \in \bcodefam_\xi$ and all $i,j \in \om$. \smallskip

\emph{Claim 3.} For all $1 \leq \xi < \om_1$ and all $\alpha \in \bcodefam_\xi$ the set $\bcode_\xi^\ca{X}(\alpha)\triangle \ocode^\ca{X}(c_\xi(\alpha))$ is meager.\smallskip

\emph{Proof of Claim 3.} This is proved by induction on $\xi$. For $\xi=1$ we have for all $\alpha \in \bcodefam_1$ that
\begin{align*}
x \in \bcode^\ca{X}_1(\alpha) 
\iff& \ (\exists n)[x \in \ca{X} \setminus \pi^\ca{X}_0(\rfn{\alpha^\ast}(n))]\\
\iff& \ (\exists n)[x \in N(\ca{X}, \rfn{\alpha^\ast}(n)(1))]\\
\iff& \ (\exists n)[x \in N(\ca{X}, c_1(\alpha)(n))]\\
\iff& \ x \in \ocode^\ca{X}(c_1(\alpha)),
\end{align*}
hence $\bcode^\ca{X}_1(\alpha) = \ocode^\ca{X}(c_1(\alpha))$.

Assume that the assertion is true for all $1 \leq \eta < \xi$. Let $\alpha \in \bcodefam_\xi$, then we have that $\bcode^\ca{X}_\xi(\alpha) = \cup_n \left(\ca{X} \setminus \bcode^\ca{X}_{|\rfn{\alpha^\ast}(n)|}(\rfn{\alpha^\ast}(n))\right)$.

To make the following arguments easier to read we relax the notation further and we put
\begin{align*}
A^c =& \ \ca{X} \setminus A, \hspace*{6.5mm} \text{where} \ A \subseteq \ca{X};\\
\alpha_n =& \ \rfn{\alpha^\ast}(n), \quad \text{for all $n$, where $\alpha \in \bcodefam_\xi$ is fixed from above};\\
\beta_n =& \ c_{|\rfn{\alpha^\ast}(n)|}(\rfn{\alpha^\ast}(n)) = c_{|\alpha_n|}(\alpha_n); \quad \text{for all $n$}.
\end{align*}
Hence $\bcode^\ca{X}_\xi(\alpha) = \cup_n \bcode^\ca{X}(c_{|\alpha_n|}(\alpha_n))^c$.

 From the inductive hypothesis the set
\begin{align}
\bcode^\ca{X}_{|\alpha_n|}(\alpha_n) \ \triangle \ \ocode^\ca{X}(c_{|\alpha_n|}(\alpha_n))
\label{equation theorem transition from Borel to open codes refined C}
=& \ \bcode^\ca{X}_{|\alpha_n|}(\alpha_n)^c \ \triangle \ \ocode^\ca{X}(\beta_n)^c
\end{align}
is meager for all \n. Moreover from the key property of the function $\cfo$ the set
\begin{align}
\fcode^\ca{X}(\beta_n) \ \triangle \ \ocode^\ca{X}(\cfo(\beta_n))
\label{equation theorem transition from Borel to open codes refined D}
=& \ \ocode^\ca{X}(\beta_n)^c \ \triangle \ \ocode^\ca{X}(\cfo(\beta_n)) 
\end{align}
is also meager from all \n. Using that the symmetric differences in (\ref{equation theorem transition from Borel to open codes refined C}) and (\ref{equation theorem transition from Borel to open codes refined D}) are meager we have that the set
\[
\bcode^\ca{X}_{|\alpha_n|}(\alpha_n)^c \ \triangle \ \ocode^\ca{X}(\cfo(\beta_n))
\]
is meager as well for all $\n$. Since $(\cup_n A_n) \triangle (\cup_n B_n) \subseteq \cup_n (A_n \cup B_n)$ we conclude that the set 
\begin{align*}
& \ \ \ \left( \cup_n \bcode^\ca{X}_{|\alpha_n|}(\alpha_n)^c \right)  \ \triangle \ \left(\cup_n \ocode^\ca{X}(\cfo(\beta_n))\right)\\
=& \ \ \ \bcode^\ca{X}_\xi(\alpha) \ \triangle \ \left(\cup_n \ocode^\ca{X}(\cfo(\beta_n))\right)
\end{align*}
is meager. 

It remains to verify that $c_\xi(\alpha)$ is an open code of the set $\cup_n \ocode^\ca{X}(\cfo(\beta_n))$. From the key property (\ref{equation theorem transition from Borel to open codes refined B}) of $\pi$ we have that
\[
\cfo(\beta_n) = \rfn{\pi(\sq{\cfo(\beta_i)}{\iin})}(n)
\]
for all \n, so from the property (\ref{equation theorem transition from Borel to open codes refined A}) of $\tau$ it follows
\begin{align*}
\cup_n \ocode^\ca{X}(\cfo(\beta_n)) 
=& \ \cup_n \ocode^\ca{X}(\rfn{\pi(\sq{\cfo(\beta_i)}{\iin})}(n))\\
=& \ \ocode^\ca{X}(\tau(\pi(\sq{\cfo(\beta_i))}{i \in \om})))\\
=& \ \ocode^\ca{X}\left(\tau(\pi(\sq{\cfo(c_{|\rfn{\alpha^\ast}(i)|}(\rfn{\alpha^\ast}(i)))}{i \in \om}))\right)\\
=& \ \ocode^\ca{X}(c_{\xi}(\alpha)).
\end{align*}
This finishes the proof of Theorem \ref{theorem transition from Borel to open codes refined}.

\subsection*{Optimality of Proposition \ref{proposition continuous transition analytic}}

Our preceding results about the intermediate steps of the Borel hierarchy offer an indication that Proposition \ref{proposition continuous transition analytic} is the best possible, \ie the uniformity function cannot witness the Baire property \emph{for all} $\alpha$ and be continuous (or even Borel) at the same time. Here we prove the latter statement, actually for all levels of the projective hierarchy.

\begin{theorem}
\label{theorem optimal main}
For every $n \geq 1$ and every $\boldd^1_n$-measurable function
\[
f: \baire \to \baire
\]
there is some $\alpha \in \baire$ such that the set
\[
\scode_n^\baire(\alpha) \triangle \ocode^\baire(f(\alpha))
\]
is non-meager.
\end{theorem}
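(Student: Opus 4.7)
The plan is to argue by contradiction, diagonalizing against $f$ via a Kleene-Recursion-Theorem fixed point for the good universal system $\scode^\baire_n$. The argument should go through uniformly in $n \geq 1$ without any determinacy hypothesis, since at no point do we need to invoke the Baire property of a $\bolds^1_n$ set.

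Assume $f$ is $\boldd^1_n$-measurable and that $\scode^\baire_n(\alpha) \triangle \ocode^\baire(f(\alpha))$ is meager for every $\alpha \in \baire$; I aim to derive a contradiction. First I would check the complexity of
\[
B = \{(\alpha,\beta) \in \baire \times \baire : \beta \notin \ocode^\baire(f(\alpha))\}.
\]
Since $\boldd^1_n$ is closed under the Borel operations, the map $(\alpha,\beta) \mapsto (f(\alpha),\beta)$ is $\boldd^1_n$-measurable, so $B$ is the preimage of the closed set $\fcode^\baire$ and lies in $\boldd^1_n$, a fortiori in $\bolds^1_n$. By universality, $B = \scode^{\baire \times \baire}_n(\gamma_0)$ for some $\gamma_0 \in \baire$, and by the goodness of the universal system, there is a continuous $S: \baire \to \baire$ with $\scode^{\baire \times \baire}_n(\gamma_0,\alpha,\beta) \iff \scode^\baire_n(S(\alpha),\beta)$. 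Consequently
\[
\scode^\baire_n(S(\alpha)) = \baire \setminus \ocode^\baire(f(\alpha)) \qquad \text{for every } \alpha \in \baire,
\]
as an \emph{exact} set equality, not merely modulo meager.

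The heart of the argument is Kleene's Recursion Theorem for the good universal system $\scode^\baire_n$: applied to the continuous map $S$, it produces $\alpha^* \in \baire$ such that $\scode^\baire_n(\alpha^*) = \scode^\baire_n(S(\alpha^*))$. Combining with the previous identity yields $\scode^\baire_n(\alpha^*) = \baire \setminus \ocode^\baire(f(\alpha^*))$, whence
\[
\scode^\baire_n(\alpha^*) \triangle \ocode^\baire(f(\alpha^*)) = \baire,
\]
which is certainly not meager, contradicting the standing assumption. Thus $\alpha = \alpha^*$ witnesses the theorem.

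The main obstacle is justifying the recursion-theoretic fixed point in the exact form used. The standard derivation for good universal systems should proceed as follows: let $S_0$ denote the goodness transformation from $\scode^{\baire \times \baire}_n$ to $\scode^\baire_n$; form the $\bolds^1_n$ set $P(\gamma,\beta) \iff \beta \in \scode^\baire_n(S(S_0(\gamma,\gamma)))$; apply universality to write $P = \scode^{\baire \times \baire}_n(\gamma^*)$; then evaluating both defining expressions for $P(\gamma^*,\beta)$ shows that $\alpha^* = S_0(\gamma^*,\gamma^*)$ satisfies $\scode^\baire_n(\alpha^*) = \scode^\baire_n(S(\alpha^*))$. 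Everything else in the argument is routine complexity bookkeeping — in particular, the verification that the product map $f \times \mathrm{id}$ is $\boldd^1_n$-measurable, which reduces to the closure of $\boldd^1_n$ under countable unions applied to the preimages $f^{-1}(U_1^k) \times U_2^k$ of basic opens.
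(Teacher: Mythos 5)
Your proof is correct, and it takes a genuinely more direct route than the paper's. Both arguments run on the same engine --- the Kleene fixed point theorem for the good universal system $\scode^{\baire}_n$ --- but the diagonal set is different. The paper encodes, in a $\boldd^1_n$ way, the relation $Q(\alpha,n)$ asserting that the clopen piece $M(n)=\{x : x(0)=n\}$ meets $\ocode^{\baire}(f(\alpha))$, takes a fixed point to manufacture an \emph{open} set $V=\scode^{\baire}_n(\alpha_0)$ with $V(x)\iff\neg Q(\alpha_0,x(0))$, and then must invoke the meagerness hypothesis in both directions (via a Baire-category analysis on the pieces $M(n)$) to also derive $V(x)\iff Q(\alpha_0,x(0))$ and reach a contradiction. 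You instead code the exact complement $\baire\setminus\ocode^{\baire}(f(\alpha))$ as a section $\scode^{\baire}_n(S(\alpha))$ --- legitimate, since $\boldd^1_n$-measurability of $f$ exhibits $\{(\alpha,\beta):\beta\in\ocode^{\baire}(f(\alpha))\}$ as a countable union of $\boldd^1_n$ rectangles, and $\boldd^1_n$ is closed under countable unions and complements --- and the fixed point $\alpha^*$ then satisfies $\scode^{\baire}_n(\alpha^*)=\baire\setminus\ocode^{\baire}(f(\alpha^*))$ outright, so the symmetric difference is all of $\baire$. This dispenses with the category analysis entirely (only non-meagerness of $\baire$ in itself is needed), and despite your contradiction framing it actually exhibits a witness directly. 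What the paper's version buys in exchange is the extra information it advertises, namely that the failure already occurs at a topologically trivial set (open, indeed determined by the first coordinate), and it runs parallel to the effective proof via the hierarchy theorem; your witness is instead a closed set. Both arguments work in {\bf ZF} for all $n\geq 1$.
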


\emph{Remarks.}  The case $n=1$ (analytic sets) in the preceding result is a consequence of some known facts in recursion theory. More specifically, using generic reals, it is possible for every recursive ordinal $\xi$ to find a hyperarithmetical set $P$ such that no open $U$ with $P \triangle U$ being meager has a code recursive in the $\xi$-th Turing jump $\emptyset^{(\xi)}$ of the empty set. The relativized version of the latter implies that there is no Borel-measurable function which takes Borel codes to open codes that witness the Baire property. The assertion of the preceding theorem for $n=1$ follows from the fact that we can transform a Borel code to an analytic code in a recursive way \cite[7B.5]{yiannis_dst}. 

Nevertheless our argument for proving Theorem \ref{theorem optimal main} applies just as good at any level of the projective hierarchy, and there is no need to approach the required level of complexity using intermediate steps like the Turing jumps in the case of analytic sets. Moreover we will actually give two proofs. One in the context of effective descriptive set theory, and a classical one, making it thus accessible to a wider range of researchers.

We also point out that the cases $n > 1$ are proven in the Zermelo-Fraenkel set theory {\bf ZF}. This refutes the existence of a witnessing $\boldd^1_n$-measurable function in \emph{any model} of {\bf ZF} in which $\bolds^1_n$ sets have the Baire property (and not just in the models of ${\bf ZF} + \bolds^1_n$-determinacy.)

Finally the statement of Theorem \ref{theorem optimal main} is also true for $\boldp^1_n$ as well, \ie there does not exist a $\boldd^1_n$-measurable function $f: \baire \to \baire$ such that the set $\neg \scode_n^\baire(\alpha) \triangle \ocode^\baire(f(\alpha))$
is meager for all $\alpha \in \baire$. This is a straightforward application of Lemma \ref{lemma from closed to open}.

The idea in our proof is to construct a set which is simple topologically, and in particular is an open set, but has a ``complex" $\bolds^1_n$-code.

\subsection*{The effective proof}

Fix $k \geq 1$ and suppose towards a contradiction that there were a $\boldd^1_k$-measurable function $f: \baire \to \baire$
such that the set $\scode_k^\baire(\alpha) \triangle \ocode^\baire(f(\alpha))$ is meager for all $\alpha \in \baire$.

We consider an $\ep \in \baire$ such that $f$ is $\Delta^1_k(\ep)$-recursive and some set $P \subseteq \om$ in $\Sigma^1_k(\ep) \setminus \Pi^1_k(\ep)$. We define the set $V \subseteq \baire$ by
\[
V(x) \iff x(0) \in P.
\]
Clearly $V$ is open and belongs to $\Sigma^1_k(\ep)$. Hence $V = \scode^ \baire_k(\alpha)$ for some $\ep$-recursive $\alpha$ and from our assumption towards contradiction the set $V \triangle \ocode(f(\alpha))$ is meager.

It is not hard to see that there exists a recursive function $\tilde{s}: \om \to \om$ such that
\[
N(\baire,\tilde{s}(n)) = \set{x \in \baire}{x(0)=n}
\]
for all \n.  We claim that
\begin{align}
\label{equation counterexample effective A}n \in P \iff (\exists m)[ N(\baire,\tilde{s}(n)) \cap N(\baire,f(\alpha)(m)) \neq \emptyset]
\end{align}
for all \n. The right-hand-side of the preceding equivalence defines a $\Delta^1_k(\ep)$ subset of \om, since
\[
N(\baire,\tilde{s}(n)) \cap N(\baire,f(\alpha)(m)) \neq \emptyset \iff (\exists i)[ r_i \in N(\baire,\tilde{s}(n)) \cap N(\baire,f(\alpha)(m))]
\]
for all \n, where $(r^\baire_i)_{\iin}$ is the fixed recursive presentation of \baire. Hence if we prove (\ref{equation counterexample effective A}) we get a contradiction because $P$ is not a $\Delta^1_n(\ep)$ set.

We now prove equivalence (\ref{equation counterexample effective A}). Suppose that $n$ is in $P$ so that $N(\baire,\tilde{s}(n)) \subseteq V$. If it were $N(\baire,\tilde{s}(n)) \cap N(\baire,f(\alpha)(m)) = \emptyset$ for all $m$, then we would have $N(\baire,\tilde{s}(n)) \cap \ocode(f(\alpha)) = \emptyset$ as well. Hence
\[
N(\baire,\tilde{s}(n)) = N(\baire,\tilde{s}(n)) \setminus \ocode(f(\alpha)) \subseteq V \setminus \ocode(f(\alpha)) \subseteq V \triangle \ocode(f(\alpha)),
\]
which implies that the non-empty set $N(\baire,\tilde{s}(n))$ is meager. Hence there exists some $m$ such that $N(\baire,\tilde{s}(n))\cap N(\baire,f(\alpha)(m)) \neq \emptyset$ and we proved the left-to-right-hand direction. Conversely if $n$ were not a member of $P$ then we would have that $N(\baire,\tilde{s}(n)) \cap V = \emptyset$. Using our hypothesis there is some $m$ such that
\[
\emptyset \neq N(\baire,\tilde{s}(n)) \cap N(\baire,f(\alpha)(m)) \subseteq (\baire \setminus V) \cap \ocode(f(\alpha)) \subseteq V \triangle \ocode(f(\alpha)),
\]
\ie the set $V \triangle \ocode(f(\alpha))$ would contain a non-empty open set, and thus it would not be meager. Hence $n$ is a member of $P$ and we have proved the equivalence (\ref{equation counterexample effective A}). This completes the effective proof.

\subsection*{The classical proof} The idea here is to trace back the contradiction of the preceding proof and bypass any reference to the complexity of subsets of naturals. In order to do this we need a fixed point type theorem, which unsurprisingly comes from effective descriptive set theory.

\begin{theorem}[Kleene's Fixed point theorem for $\bolds^1_n$ relations, see 3H.3 \cite{yiannis_dst}]
\label{theorem fixed point}
Suppose that $\ca{X}$ is a Polish space and that $n \geq 1$. For all $A \subseteq \baire \times \ca{X}$ in $\bolds^1_n$ there exists some $\ep_0 \in \baire$ such that
\[
A(\ep_0,x) \iff \scode_n^{\ca{X}}(\ep_0,x)
\]
for all $x \in \ca{X}$.
\end{theorem}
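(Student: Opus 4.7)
My plan is to prove the fixed point theorem by the standard Kleene-style diagonalization trick, using the goodness of the universal system as the sole external ingredient. Specifically, I will appeal to the continuous ``S-m-n''-type function $S \colon \baire \times \baire \to \baire$ supplied by the good universal system, satisfying
\[
\scode_n^{\baire \times \ca{X}}(\ep,\alpha,x) \iff \scode_n^{\ca{X}}(S(\ep,\alpha),x)
\]
for all $\ep,\alpha \in \baire$ and $x \in \ca{X}$; this is the ``straight-forward computation" already alluded to in the Introduction, shifting a $\baire$-parameter out of the point-space into the code.

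The argument will then proceed in three compact steps. First, given $A \in \bolds^1_n \upharpoonright (\baire \times \ca{X})$, I will form the diagonalized relation
\[
B \subseteq \baire \times \ca{X}, \qquad B(\alpha,x) \iff A(S(\alpha,\alpha),x),
\]
and note that $B \in \bolds^1_n$ since $S$ is continuous and $A$ is in $\bolds^1_n$. Second, I will invoke universality of $\scode_n^{\baire \times \ca{X}}$ to obtain some $\delta \in \baire$ with $B(\alpha,x) \iff \scode_n^{\baire \times \ca{X}}(\delta,\alpha,x)$ for all $\alpha,x$. Third, I will instantiate the goodness identity with $\ep := \delta$ and then substitute $\alpha := \delta$, chaining the resulting equivalences
\[
A(S(\delta,\delta),x) \iff B(\delta,x) \iff \scode_n^{\baire \times \ca{X}}(\delta,\delta,x) \iff \scode_n^{\ca{X}}(S(\delta,\delta),x),
\]
and take $\ep_0 := S(\delta,\delta)$.

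The only possible obstacle is that the goodness clause in the Introduction is phrased for parameter spaces $\om^n \times \baire^m$, whereas here a general Polish $\ca{X}$ occupies the ``point'' side. However the required form of $S$ only shifts a $\baire$-coordinate into the code, and for this the identity is verified by the same unravelling of the definitions of $\ocode^{\baire \times \ca{X}}$ and the recursion defining $\scode_n^{\baire \times \ca{X}}$ that underlies the Introduction's remark, together with the recursive isomorphism $\baire \times \baire \cong \baire$. Once $S$ is in hand the diagonalization is automatic, and the whole argument reproduces the proof of 3H.3 in \cite{yiannis_dst}.
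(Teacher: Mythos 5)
Your proposal is correct and is essentially identical to the paper's proof: the same diagonalization $B(\alpha,x) \iff A(S(\alpha,\alpha),x)$, the same appeal to universality of $\scode_n^{\baire\times\ca{X}}$ to get a code $\delta$ (the paper's $\ep_1$), and the same chain of equivalences ending with $\ep_0 = S(\delta,\delta)$. Your closing remark about the goodness clause being stated for parameter spaces of the form $\om^n\times\baire^m$ while the point space is a general Polish $\ca{X}$ correctly identifies and resolves the only notational wrinkle, which the paper glosses over.
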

We present the proof as it appears in \cite{yiannis_dst}.
\begin{proof}
Suppose that $S: \baire \times \baire \to \baire$ be the function which witnesses that our system $(\scode^{\ca{Y}}_n)_{\ca{Y}}$ is good at the instance $\ca{X} = \baire$. We define $B \subseteq \baire \times \ca{X}$ by
\[
B(\alpha,x) \iff A(S(\alpha,\alpha),x).
\]
There exists some $\ep_1 \in \baire$ such that $B = \scode^{\baire \times \ca{X}}_n(\ep_1)$. It follows that
\begin{align*}
A(S(\alpha,\alpha),x) \iff& B(\alpha,x)\\
                                 \iff& \scode_n^{\baire \times \ca{X}}(\ep_1,\alpha,x)\\
                                 \iff& \scode_n^{\ca{X}}(S(\ep_1,\alpha),x)
\end{align*}
for all $x, \alpha$. So if we take $\alpha = \ep_1$ and $\ep_0 = S(\ep_1,\ep_1)$ we are done.
\end{proof}

We now proceed to the classical proof of Theorem \ref{theorem optimal main}. Suppose towards a contradiction that there exists a $\boldd^1_k$-measurable function $f: \baire \to \baire$  such that the set $\scode_k^\baire(\alpha) \triangle \ocode^\baire(f(\alpha))$ is meager for all $\alpha \in \baire$. We define
\[
M(n) = \set{x \in \baire}{x(0)=n}
\]
for all $n$ and $Q \subseteq \baire \times \om$ by
\begin{align*}
Q(\alpha,n) \iff& (\exists m)[M(n) \cap N(f(\alpha)(m)) \neq \emptyset].
\end{align*}
Since both $M(n)$ and $N(f(\alpha)(m))$ are open we have that
\[
Q(\alpha,n) \iff (\exists m,i)[r^\baire_i \in M(n) \cap N(f(\alpha)(m))],
\]
hence $Q$ is a $\boldd^1_k$ set. In particular the complement of $Q$ is $\bolds^ 1_k$ and therefore there exists some $\ep_0$ such that
\begin{align}
\label{equation counterexample classic D}\neg Q(\alpha,n) \iff \scode^\ca{\baire \times \om}_k(\ep_0,\alpha,n)
\end{align}
for all $\alpha, n$. We now define $A \subseteq \baire \times \baire$ as follows
\[
A(\alpha,x) \iff \scode^{\baire \times \om}_k(\ep_0,\alpha,x(0)).
\]
Since $A$ is a $\bolds^1_k$ subset of \ca{X} from Theorem \ref{theorem fixed point} there exists some $\alpha_0$ such that
\[
A(\alpha_0,x) \iff \scode^{\baire}_k(\alpha_0,x)
\]
for all $x$. We now define $V \subseteq \baire$ by
\[
V(x) \iff \scode^{\baire}_k(\alpha_0,x).
\]
By definition $V$ is $\bolds^1_k$ with code $\alpha_0$. Clearly
\begin{align}
\label{equation counterexample classic C}V(x) \iff A(\alpha_0,x) \iff \scode^{\baire \times \om}_k(\ep_0,\alpha_0,x(0)),
\end{align}
and so membership in $V$ depends only on the first coordinate; hence $V$ is open. Moreover we have that
\begin{align}
\label{equation counterexample classic B}M(n) \cap V \neq \emptyset \iff M(n) \subseteq V
\end{align}
for all $n$. From our assumption about $f$ it follows that the set
\[
\scode^\baire_k(\alpha_0) \triangle \ocode(f(\alpha_0)) = V \triangle \ocode(f(\alpha_0))
\]
is meager.

We now claim that
\begin{align}
\label{equation counterexample classic A} V(x) \iff (\exists m)[M(x(0)) \cap N(f(\alpha_0)(m)) \neq \emptyset]
\end{align}
for all $x \in \baire$. To see the latter suppose first that $x$ is in $V$, so that using (\ref{equation counterexample classic B}) $M(x(0)) \subseteq V$. If it where $M(x(0)) \cap N(f(\alpha)(m)) = \emptyset$ for all $m$, then we would have $M(x(0)) \cap \ocode(f(\alpha_0)) = \emptyset$. Hence
\[
M(x(0)) = M(x(0)) \setminus \ocode(f(\alpha_0)) \subseteq V \setminus \ocode(f(\alpha_0)) \subseteq V \triangle \ocode(f(\alpha_0)),
\]
which is a contradiction since no $M(n)$ is a meager set. We prove the converse by contraposition. If $x$ is not a member of $V$, then $M(x(0)) \not \subseteq V$ and again from (\ref{equation counterexample classic B}) we have that $M(x(0)) \cap V = \emptyset$. Hence for all $m$,
\[
M(x(0)) \cap N(\baire, f(\alpha_0)(m)) \subseteq \left(\baire \setminus V \right)\cap \ocode(f(\alpha_0)) \subseteq \ V \triangle \ocode(f(\alpha_0))
\]
Since $V \triangle \ocode(f(\alpha_0))$ is meager, it follows that the open set $M(x(0)) \cap N(\baire, f(\alpha_0)(m))$ is empty for all $m$, \ie the right-hand-side of (\ref{equation counterexample classic A}) fails. Hence we have proved the preceding equivalence.

Putting all together we have that
\begin{align*}
Q(\alpha_0,0) \iff& (\exists m)[M(0) \cap N(f(\alpha_0)(m)) \neq \emptyset] \hspace*{10mm}\textrm{(by definition)}\\
                      \iff& (0,0,0,\dots) \in V\hspace*{34.5mm}\textrm{(from (\ref{equation counterexample classic A}))}\\
                      \iff& \scode^{\baire \times \om}_k(\ep_0,\alpha_0, 0)\hspace*{34.5mm}\textrm{(from (\ref{equation counterexample classic C}))}\\
                      \iff& \neg Q(\alpha_0,0)\hspace*{44mm}\textrm{(from (\ref{equation counterexample classic D}))}
\end{align*}
a contradiction.

\end{document}